\theoremstyle{definition}
 \newtheorem{definition}{Definition}[section]
\theoremstyle{plain}
\theoremstyle{plain}
 \newtheorem{theorem}[definition]{Theorem}
\theoremstyle{definition}
 \newtheorem{example}[definition]{Example}
\theoremstyle{plain}
 \newtheorem{lemma}[definition]{Lemma}
\theoremstyle{plain}
 \newtheorem{corollary}[definition]{Corollary}
\theoremstyle{remark}
 \newtheorem{remark}[definition]{Remark}
\theoremstyle{definition}
\theoremstyle{plain}
\newcommand{\Ext}{\mathrm{Ext}}
\newcommand{\End}{\mathrm{End}}
\newcommand{\Hom}{\mathrm{Hom}}
\newcommand{\SHom}{\underline{\mathrm{Hom}}}
\newcommand{\Ca}{\mathcal{C}}
\newcommand{\Fun}{\mathrm{F}}
\newcommand{\Def}{\mathrm{Def}}
\newcommand{\Sets}{\mathrm{Sets}}
\newcommand{\Ob}{\mathrm{Ob}}
\newcommand{\Z}{\mathbb{Z}}
\newcommand{\SEnd}{\underline{\End}}
\newcommand{\A}{\Lambda}
\newcommand{\m}{\mathfrak{m}}
\renewcommand{\k}{\Bbbk}
\newcommand{\invlim}{\varprojlim}
\title[Singular equivalences of Morita type with level and universal deformation rings]{Singular equivalences of Morita type with level, Gorenstein algebras, and universal  deformation rings} 
\dedicatory{To the memory and legacy of my great-grandfather Jes\'us Mar\'{\i}a Berm\'udez.}  
\thanks{The author was supported by the Faculty Research Scholarship of the College of Sciences and Mathematics at the Valdosta State University.}
\author{Jos\'e A. V\'elez-Marulanda}
\address{Department of Mathematics, Valdosta State University, Valdosta, GA, U.S.A.}
\email{javelezmarulanda@valdosta.edu}
\keywords{Singular equivalences of Morita type with level \and universal deformation rings \and stable endomorphism rings \and finitely generated Gorenstein-projective modules \and Gorenstein algebras}
\begin{document}
\renewcommand{\labelenumi}{\textup{(\roman{enumi})}}
\renewcommand{\labelenumii}{\textup{(\roman{enumi}.\alph{enumii})}}
\numberwithin{equation}{section}


\begin{abstract}
Let $\k$ be a field of arbitrary characteristic, let $\A$ be a finite dimensional $\k$-algebra, and let $V$ be an indecomposable finitely generated non-projective Gorenstein-projective left $\A$-module whose stable endomorphism ring is isomorphic to $\k$. In this article, we prove that the universal deformation rings $R(\A,V)$ and $R(\A,\Omega_\A V)$ are isomorphic, where $\Omega_\A V$ denotes the first syzygy of $V$ as a left $\A$-module. We also prove the following result. Assume that $\A$ is also Gorenstein and that $\Gamma$ is another Gorenstein $\k$-algebra such that there exists $\ell \geq 0$ and a pair of bimodules $({_\Gamma}X_\A, {_\A}Y_\Gamma)$ that induces a singular equivalence of Morita type with level $\ell$ (as introduced by Z. Wang) between $\A$ and $\Gamma$. Then the left $\Gamma$-module $X\otimes_\A V$ is also Gorenstein-projective with stable endomorphism ring isomorphic to $\k$, and the universal deformation ring $R(\Gamma, X\otimes_\A V)$ is isomorphic to $R(\A, V)$.
\end{abstract}
\subjclass[2010]{16G10 \and 16G20 \and 16G50}
\maketitle

\section{Introduction}\label{int}

Throughout this article, we assume that $\k$ is a fixed field of arbitrary characteristic. We denote by $\widehat{\Ca}$ the category of all complete local commutative Noetherian $\k$-algebras with residue field $\k$. In particular, the morphisms in $\widehat{\Ca}$ are continuous $\k$-algebra homomorphisms that induce the identity map on $\k$.  Let $\A$ be a fixed finite dimensional $\k$-algebra, and let $R$ be a fixed but arbitrary object in $\widehat{\Ca}$.  We denote by $R\A$ the tensor product of $\k$-algebras $R\otimes_\k\A$, and denote by $(R\A)^e$ the enveloping $R$-algebra of $R\A$, i.e. $(R\A)^e=R\A\otimes_R(R\A)^\textup{op}$, where $(R\A)^\textup{op}$ denotes the opposite $R$-algebra of $R\A$. In particular, all $R\A$-$R\A$-bimodules coincide with all left $(R\A)^e$-modules. Note that if $R$ is an Artinian object in $\widehat{\Ca}$, then $R\A$ is also Artinian (both on the left and the right sides). We denote by $R\A$-mod the abelian category of finitely generated left $R\A$-modules and by $R\A$-\underline{mod} its stable category.  In this article, we assume all our modules to be finitely generated. Let $M$ be a left $R\A$-module. We denote by $\End_{R\A}(M)$ (resp. by $\SEnd_{R\A}(M)$) the endomorphism ring (resp. the stable endomorphism ring) of $M$. If $R$ is an Artinian object in $\widehat{\Ca}$, then we denote by $\Omega_{R\A} M$ the first syzygy of $M$, i.e. $\Omega_{R\A} M$ is the kernel of a projective cover $P\to M$ of $M$ over $R\A$, which is unique up to isomorphism. In particular, if $N$ is a left $(R\A)^e$-module, we denote by $\Omega_{(R\A)^e}N$ the syzygy of $N$ as a $(R\A)^e$-module. Recall that $\A$ is said to be a {\it Gorenstein} $\k$-algebra provided that $\A$ has finite injective dimension as a left and right $\A$-module (see \cite{auslander2}). In particular, algebras of finite global dimension as well as self-injective algebras are Gorenstein.

Let $V$ be a fixed left $\A$-module. In \cite[Prop. 2.1]{blehervelez}, F. M. Bleher and the author proved that $V$ has a well-defined versal deformation ring $R(\A,V)$ in $\widehat{\Ca}$, which is universal provided that $\End_\A(V)$ is isomorphic to $\k$. Moreover, they also proved that versal deformation rings are preserved under Morita equivalences (see \cite[Prop. 2.5]{blehervelez}).  Following \cite{enochs0,enochs}, we say that $V$ is {\it Gorenstein-projective} provided that there exists an acyclic complex of projective left $\A$-modules 
\begin{equation*}
P^\bullet: \cdots\to P^{-2}\xrightarrow{f^{-2}} P^{-1}\xrightarrow{f^{-1}} P^0\xrightarrow{f^0}P^1\xrightarrow{f^1}P^2\to\cdots
\end{equation*}  
such that $\Hom_\A(P^\bullet, \A)$ is also acyclic and $V=\mathrm{coker}\,f^0$. In particular, every projective left $\A$-module is Gorenstein-projective. Following \cite{auslander4} and \cite[\S 2]{avramov2},  $V$ is said to have {\it Gorenstein dimension zero} or that it is {\it totally reflexive} provided that  $V$ is reflexive (i.e. $V$ and $\Hom_\A(\Hom_\A(V,\A),\A)$ are isomorphic as left $\A$-modules), and that $\Ext_\A^i(V,\A)=0=\Ext_\A^i(\Hom_\A(V,\A),\A)$ for all $i>0$. It is well-known that finitely generated Gorenstein-projective modules coincide with those that are totally reflexive (see e.g. \cite[\S 2.4]{avramov2}). Following \cite{buchweitz}, $V$ is said to be {\it (maximal) Cohen-Macaulay} provided that $\Ext_\A^i(V,\A)=0$, for all $i>0$. It follows by \cite[Prop. 4.1]{auslander3} that if $\A$ is a Gorenstein $\k$-algebra, then $V$ is a Gorenstein-projective left $\A$-module if and only if  $V$ is (maximal) Cohen-Macaulay.  It is important to mention that  there are examples of finite dimensional algebras $\A$ and left $\A$-modules $V$ that satisfy that $\Ext_\A^i(V,\A)=0$ for all $i>0$, but $V$ is not a Gorenstein-projective $\A$-module (see e.g. \cite[Example A.3]{hoshino-koga} and \cite{ringel-zhang,ringel-zhang2}). We denote by $\A$-Gproj the category of Gorenstein-projective left $\A$-modules, and by $\A\textup{-\underline{Gproj}}$ its stable category.  It is well-known that $\A$-Gproj is a Frobenius category in the sense of \cite[Chap. I, \S 2.1]{happel} and consequently,  $\A\textup{-\underline{Gproj}}$ is a triangulated category (in the sense of \cite{verdier}). Moreover, if $V$ is non-projective Gorenstein-projective, then for all $i\geq 0$, the $i$-th syzygy $\Omega_\A^iV$ is also non-projective Gorenstein-projective, and $\Omega_\A$ induces an autoequivalence $\Omega_\A:\A\textup{-\underline{Gproj}}\to \A\textup{-\underline{Gproj}}$ (see  \cite[Chap. I, \S2.2]{happel}). 

In \cite[Thm. 1.2]{bekkert-giraldo-velez}, it was proved that if $V$ is a Gorenstein-projective left $\A$-module with $\SEnd_\A(V)=\k$, then the versal deformation ring $R(\A,V)$ is universal, which generalizes \cite[Thm. 2.6 (ii) ]{blehervelez}. Moreover,  it was also proved that versal deformation rings of Gorenstein-projective modules are preserved under {\it singular equivalences of Morita type} between Gorenstein $\k$-algebras, which generalizes  \cite[Prop. 3.2.6]{blehervelez2}. These singular equivalences of Morita type were introduced by X. W. Chen and L. G. Sun in \cite{chensun} and then further studied by G. Zhou and A. Zimmermann in \cite{zhouzimm} as a way of generalizing the concept of stable equivalences of Morita type as introduced by M. Brou\'e in \cite{broue}.  


In order to state the main result of this article, we first need to recall the following definition due to Z. Wang (see \cite[Def. 2.1]{wang}) that generalizes the concept of singular equivalences of Morita type. 

\begin{definition}\label{defi:3.2}
Let $\A$ and $\Gamma$ be finite dimensional $\k$-algebras, and let $X$ be a $\Gamma$-$\A$-bimodule and $Y$ a $\A$-$\Gamma$-bimodule. We say that the pair $({_\Gamma}X_\A,{_\A}Y_\Gamma)$ induces a {\it singular equivalence of Morita type with level} $\ell\geq 0$ between $\A$ and $\Gamma$ and say that $\A$ and $\Gamma$ are {\it singularly equivalent of Morita type with level $\ell$} if the following conditions are satisfied:
\begin{enumerate}
\item $X$ is projective as a left $\Gamma$-module and as a right $\A$-module;
\item $Y$ is projective as a left $\A$-module and as a right $\Gamma$-module; 
\item $X\otimes_\A Y\cong \Omega_{\Gamma^e}^\ell \Gamma$ in $\Gamma^e$-\underline{mod};
\item $Y\otimes_\Gamma X\cong \Omega_{\A^e}^\ell \A$ in $\A^e$-\underline{mod}.  
\end{enumerate}
\end{definition}

The goal of this article is to prove the following result.

\begin{theorem}\label{thm1}
Let $\A$ be a finite dimensional $\k$-algebra and let $V$ be an indecomposable non-projective Gorenstein-projective left $\A$-module with $\SEnd_{\A}(V)=\k$. 
\begin{enumerate}
\item The stable endomorphism ring $\SEnd_\A(\Omega_\A V)$ is isomorphic to $\k$, and the universal deformation ring $R(\A, \Omega_\A V)$ is isomorphic to  $R(\A,V)$ in $\widehat{\Ca}$.
\item  Assume that $\A$ is Gorenstein and let $\Gamma$ be another finite dimensional Gorenstein $\k$-algebra such that there exists $\ell \geq 0$ and a pair of bimodules $({_\Gamma}X_\A,{_\A}Y_{\Gamma})$ that induces a singular equivalence of Morita type with level $\ell$ as in Definition \ref{defi:3.2}. Then $X\otimes_\A V$ is a non-projective Gorenstein-projective left $\Gamma$-module such that $\SEnd_{\Gamma}(X\otimes_\A V)=\k$ and the universal deformation ring $R(\Gamma, X\otimes_\A V)$ is isomorphic to $R(\A, V)$ in $\widehat{\Ca}$.
\end{enumerate}  

\end{theorem}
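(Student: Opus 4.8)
The plan is to treat the two parts with a common strategy: each asserted equivalence-type operation (applying $\Omega_\A$, or tensoring with ${_\Gamma}X_\A$) preserves (a) the Gorenstein-projective property, (b) non-projectivity, (c) having stable endomorphism ring $\k$, and (d) the (now automatically universal) deformation ring. I would isolate two general principles and then specialize.

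For part (i), note that since $V$ is non-projective Gorenstein-projective, the discussion recalled in the introduction (following \cite[Chap.~I, \S2.2]{happel}) gives that $\Omega_\A V$ is again non-projective Gorenstein-projective and that $\Omega_\A$ is an autoequivalence of $\A\textup{-\underline{Gproj}}$. An autoequivalence of an additive category preserves endomorphism rings in the stable category, so $\SEnd_\A(\Omega_\A V)\cong\SEnd_\A(V)=\k$; in particular $\Omega_\A V$ is indecomposable (its stable endomorphism ring is local), hence indecomposable as an $\A$-module up to projective summands, and since it has no nonzero projective summand — a syzygy of an indecomposable non-projective module over an Artin algebra has no projective direct summands — it is genuinely indecomposable. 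By \cite[Thm.~1.2]{bekkert-giraldo-velez}, both $R(\A,V)$ and $R(\A,\Omega_\A V)$ are universal. To identify them, I would run the standard deformation-theoretic argument: for $R\in\widehat{\Ca}$ Artinian, a deformation of $V$ over $R$ is a pair $(M,\phi)$ with $M$ a left $R\A$-module, free over $R$... , and I would show $\Omega_{R\A} M$ is a lift of $\Omega_\A V$ over $R$, using that $R\A$ is Artinian and that $-\otimes_R\k$ takes a projective cover of $M$ over $R\A$ to a projective cover of $V$ over $\A$ (flatness of $M$ over $R$ ensures the syzygy stays $R$-free). This yields a natural transformation $\Def(V,-)\to\Def(\Omega_\A V,-)$; running the same construction with a "cosyzygy" (available since $V$ is totally reflexive) gives an inverse up to the natural isomorphism provided by the autoequivalence $\Omega_\A$. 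Universality of both rings then forces $R(\A,V)\cong R(\A,\Omega_\A V)$.

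For part (ii), the key inputs are conditions (i)–(iv) of Definition~\ref{defi:3.2}. Projectivity of $X$ as a right $\A$-module makes $X\otimes_\A-$ exact on $\A$-mod and sends projectives to projectives (as $X$ is projective over $\Gamma$), so it takes a complete projective resolution of $V$ to a complete projective resolution of $X\otimes_\A V$; one checks the $\Hom_\Gamma(-,\Gamma)$-acyclicity using that $\A$ is Gorenstein (so Gorenstein-projective $=$ maximal Cohen–Macaulay, as recalled via \cite[Prop.~4.1]{auslander3}) together with the adjunction/Hom-tensor interplay coming from $Y$ being projective on both sides. Hence $X\otimes_\A V$ is Gorenstein-projective over $\Gamma$. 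For the stable endomorphism ring, I would use that on stable categories of Gorenstein-projectives the functors $\overline{X\otimes_\A-}$ and $\overline{Y\otimes_\Gamma-}$ are quasi-inverse up to the syzygy-shift autoequivalences: from (iii) and (iv), $Y\otimes_\Gamma X\otimes_\A V\cong \Omega_{\A^e}^\ell\A\otimes_\A V\cong\Omega_\A^\ell V$ in $\A\textup{-\underline{Gproj}}$ and symmetrically on the $\Gamma$ side, because tensoring a bimodule syzygy $\Omega_{\A^e}^\ell\A$ with the Gorenstein-projective module $V$ computes $\Omega_\A^\ell V$ up to projectives (here one uses that the relevant $\mathrm{Tor}$'s vanish, again via total reflexivity). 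Therefore $X\otimes_\A-$ induces an equivalence $\A\textup{-\underline{Gproj}}\to\Gamma\textup{-\underline{Gproj}}$, so $\SEnd_\Gamma(X\otimes_\A V)\cong\SEnd_\A(V)=\k$, $X\otimes_\A V$ is non-projective (otherwise $V\cong\Omega_\A^{-\ell}(Y\otimes_\Gamma(X\otimes_\A V))$ would be $0$ in the stable category, i.e. projective), and it is indecomposable for the same local-ring reason as above. Finally, \cite[Thm.~1.2]{bekkert-giraldo-velez} applies to give that $R(\Gamma, X\otimes_\A V)$ is universal and is isomorphic to $R(\A,V)$; alternatively one invokes the cited preservation of (uni)versal deformation rings under this kind of equivalence, now reducing via part (i) to absorb the $\Omega^\ell$ shift.

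The main obstacle I anticipate is the deformation-theoretic bookkeeping over an arbitrary Artinian base $R\in\widehat{\Ca}$: one must verify that $X\otimes_\A-$ (resp. $\Omega_{R\A}$) carries a deformation of $V$ over $R$ to a deformation of $X\otimes_\A V$ (resp. $\Omega_\A V$) over the same $R$ — i.e. that $R$-freeness is preserved and that the construction is functorial in $R$ and compatible with the tangent-space identifications — and that the resulting natural transformation of deformation functors is an isomorphism. The module-category statements (Gorenstein-projectivity, stable endomorphism ring) are comparatively formal once the exactness and projective-preservation properties of the functors are in place; the subtle point is propagating everything through the base change $-\otimes_R\k$ while keeping track of the level-$\ell$ syzygy shift, which part (i) is precisely designed to neutralize.
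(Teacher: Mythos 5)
Your architecture matches the paper's — a syzygy-induced map on sets of deformations for (i), a tensor-induced map for (ii) with part (i) used to absorb the $\Omega^{\ell}$ shift — but the proposal leaves the one genuinely non-formal step unproved, and it is exactly the step the paper singles out as the reason the autoequivalence $\Omega_\A$ of $\A\textup{-\underline{Gproj}}$ does not suffice. To see that $[M_1,\phi_1]\mapsto[\Omega_{R\A}M_1,\Omega_{R\A}\phi_1]$ is \emph{injective} you must know that two lifts of $V$ over an Artinian $R$ with isomorphic syzygies are themselves isomorphic; your plan to "run the same construction with a cosyzygy" only produces \emph{some} lift $L$ of $V$ with $\Omega_{R\A}L\cong U$, and without a uniqueness statement this does not give an inverse map — the uniqueness is equivalent to the injectivity you are trying to prove. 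The paper's resolution is its Lemma on lifts: by induction on small extensions $0\to tR\to R\to R_0\to 0$ one shows that \emph{every} $R\A$-module $M$ with $\k\otimes_R M\cong V$ satisfies $\Ext^i_{R\A}(M,R\A)=0$ for $i>0$ \emph{and} is reflexive over $R\A$; the second property needs $\Ext^i_\A(\mathrm{Tr}_\A V,\A)=0$, i.e.\ the full Auslander--Bridger total reflexivity of $V$, not just Cohen--Macaulayness. With that in hand one dualizes $0\to\Omega_{R\A}M_i\to P_R(V)\to M_i\to 0$, applies Schanuel and Krull--Schmidt--Azumaya to get $M_1^{\ast}\cong M_2^{\ast}$, and recovers $M_1\cong M_2$ by double duality. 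Your proposal never propagates total reflexivity from $V$ to its lifts over $R$, so the injectivity (and hence the bijectivity) of your natural transformation is unproved. Surjectivity likewise requires the small-extension induction (to lift the inclusion $\Omega_\A V\hookrightarrow P(V)$ to $U\hookrightarrow P_R(V)$ using $\Ext^1_{R\A}(U,P(V))=0$), which is again more than "the cosyzygy is available."

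For part (ii) there are two further issues. First, you cannot close the argument by invoking the cited preservation of deformation rings from the earlier work: that result is for singular equivalences of Morita type \emph{without} level, and extending it to the with-level setting is precisely the content of the theorem, so that route is circular. Second, the reduction "via part (i) to absorb the $\Omega^{\ell}$ shift" needs the $R$-linear analogue of the identity $Y\otimes_\Gamma X\otimes_\A V\cong\Omega_\A^{\ell}V\oplus T_\ell$, namely $Y_R\otimes_{R\Gamma}X_R\otimes_{R\A}M\cong\Omega_{R\A}^{\ell}M\oplus(T_\ell)_R$ for an arbitrary lift $M$ of $V$ over Artinian $R$. The paper proves this as a separate lemma by lifting the minimal bimodule projective resolution of $\A$ to one of $R\A$ and inducting on $i$ to identify $\Omega^i_{(R\A)^e}(R\A)\otimes_{R\A}M$ with $\Omega^i_{R\A}M$ up to an explicit projective summand; this computation is what makes both injectivity and surjectivity of the tensor-induced map on deformations go through, and it is absent from your outline. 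The module-level claims in your part (ii) (Gorenstein-projectivity, $\SEnd_\Gamma(X\otimes_\A V)\cong\k$, non-projectivity) are fine and agree with the paper, which obtains them from the known quasi-inverse triangle equivalences $X\otimes_\A-$ and $Y\otimes_\Gamma-$ on the stable categories of Gorenstein-projectives.
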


Note that Theorem \ref{thm1} (i) generalizes \cite[Thm. 2.6 (iv)]{blehervelez} and answers affirmatively a question raised in \cite[Rem. 5.5]{bekkert-giraldo-velez}. Moreover, Theorem \ref{thm1} (ii) gives a version of \cite[Thm. 1.2 (iii)]{bekkert-giraldo-velez} for singular equivalences of Morita type with level between Gorenstein $\k$-algebras. 

\begin{remark}
Let $\A$ be a finite dimensional $\k$-algebra.
\begin{enumerate}
\item It is important to mention that in the proof of Theorem \ref{thm1} (i), it is not enough to only use that $\Omega_\A$ is a self-equivalence of $\A\textup{-\underline{Gproj}}$ (as mentioned above), for we also need the following result from \cite[Prop. 3.8]{auslander4}: $V$ is totally reflexive if and only if $\Ext_\A^i(V,\A)=0=\Ext_{\A}^i(\mathrm{Tr}_{\A}V,\A)$ for all $i>0$, where $\mathrm{Tr}_{\A}V$ is the transpose of $V$ (see e.g. \cite[\S IV.1]{auslander}).   
\item In Example \ref{exam1} we show that Theorem \ref{thm1} (ii) fails provided that the conditions that $\A$ is Gorenstein and that $V$ is a non-projective Gorenstein projective left $\A$-module with $\SEnd_\A(V)\cong \k$ are not both satisfied. 
\end{enumerate}    
\end{remark}

The remainder of this article is organized as follows. In \S \ref{sec21}, we review the precise definition of lifts, deformations, universal deformations and universal deformations rings from \cite{blehervelez}. We also discuss some properties of singular equivalences of Morita type with level and syzygies of modules. In \S \ref{sec3} we prove Theorem \ref{thm1}. Finally, in \S \ref{sec4}, we provide some immediate applications of Theorem \ref{thm1} to Morita and triangular matrix $\k$-algebras as well as to singular equivalences induced by homological epimorphisms (in the sense of \cite{geigle}) and $2$-recollements of triangulated categories (in the sense of \cite{qin2}).

\section{Preliminaries}\label{sec2}
Throughout this section we keep the notation introduced in \S \ref{int}. 

\subsection{Lifts, deformations, and (uni)versal deformation rings}\label{sec21}

Let $V$ be a left $\A$-module and let $R$ be a fixed but arbitrary object in $\widehat{\Ca}$. A {\it lift} $(M,\phi)$ 
of $V$ over $R$ is a finitely generated left $R\A$-module $M$ 
that is free over $R$ 
together with an isomorphism of $\A$-modules $\phi:\k\otimes_RM\to V$. Two lifts $(M,\phi)$ and $(M',\phi')$ over $R$ are {\it isomorphic} 
if there exists an $R\A$-module 
isomorphism $f:M\to M'$ such that $\phi'\circ (\mathrm{id}_\k\otimes_R f)=\phi$.
If $(M,\phi)$ is a lift of $V$ over $R$, we  denote by $[M,\phi]$ its isomorphism class and say that $[M,\phi]$ is a {\it deformation} of $V$ 
over $R$. We denote by $\Def_\A(V,R)$ the 
set of all deformations of $V$ over $R$. The {\it deformation functor} corresponding to $V$ is the 
covariant functor $\widehat{\Fun}_V:\widehat{\Ca}\to \Sets$ defined as follows: for all objects $R$ in $\widehat{\Ca}$, define $\widehat{\Fun}_V(R)=\Def_
\A(V,R)$, and for all morphisms $\theta:R\to 
R'$ in $\widehat{\Ca}$, 
let $\widehat{\Fun}_V(\theta):\Def_\A(V,R)\to \Def_\A(V,R')$ be defined as $\widehat{\Fun}_V(\theta)([M,\phi])=[R'\otimes_{R,\theta}M,\phi_\theta]$, 
where $\phi_\theta: \k\otimes_{R'}
(R'\otimes_{R,\theta}M)\to V$ is the composition of $\A$-module isomorphisms 
\[\k\otimes_{R'}(R'\otimes_{R,\theta}M)\cong \k\otimes_RM\xrightarrow{\phi} V.\] 


Suppose there exists an object $R(\A,V)$ in $\widehat{\Ca}$ and a deformation $[U(\A,V), \phi_{U(\A,V)}]$ of $V$ over $R(\A,V)$ with the 
following property. For all objects $R$ in $\widehat{\Ca}$ and for all deformations $[M,\phi]$ of $V$ over $R$, there exists a morphism $\psi_{R(\A,V),R,[M,\phi]}:R(\A,V)\to R$ 
in $\widehat{\Ca}$ such that 
\[\widehat{\Fun}_V(\psi_{R(\A,V),R,[M,\phi]})[U(\A,V), \phi_{U(\A,V)}]=[M,\phi],\]
and moreover, $\psi_{R(\A,V),R,[M,\phi]}$ is unique if $R$ is the ring of dual numbers $\k[\epsilon]$ with $\epsilon^2=0$.  Then $R(\A,V)$ and $
[U(\A,V),\phi_{U(\A,V)}]$ are called the {\it versal deformation ring} and {\it versal deformation} of $V$, respectively. If the morphism $
\psi_{R(\A,V),R,[M,\phi]}$ is unique for all $R\in\Ob(\widehat{\Ca})$ and deformations $[M,\phi]$ of $V$ over $R$, then $R(\A,V)$ and $[U(\A,V),\phi_{U(\A,V)}]$ are 
called the {\it universal deformation ring} and the {\it universal deformation} of $V$, respectively.  In other words, the universal deformation 
ring $R(\A,V)$ represents the deformation functor $\widehat{\Fun}_V$ in the sense that $\widehat{\Fun}_V$ is naturally isomorphic to the $\Hom$ 
functor $\Hom_{\widehat{\Ca}}(R(\A,V),-)$. 

We denote by $\Fun_V$  the restriction of $\widehat{\Fun}_V$ to the full subcategory of Artinian objects in $\widehat{\Ca}$. Following \cite[\S 2.6]{sch}, we call the set $\Fun_V(\k[\epsilon])$ the tangent space of $\Fun_V$, which has a structure of a $\k$-vector space by \cite[Lemma 2.10]{sch}. 
It was proved in \cite[Prop. 2.1]{blehervelez} that $\Fun_V$ satisfies the Schlessinger's criteria \cite[Thm. 2.11]{sch}, that there exists an isomorphism of $\k$-vector spaces 
\begin{equation}\label{hoch}
\Fun_V(\k[\epsilon])\to \Ext_\A^1(V,V),
\end{equation}
and that $\widehat{\Fun}_V$ is continuous in the sense of \cite[\S 14]{mazur}, i.e. for all objects $R$ in $\widehat{\Ca}$, we have  
\begin{equation}\label{cont}
\widehat{\Fun}_V(R)=\invlim_n \Fun_V(R/\m_R^n), 
\end{equation}
where $\m_R$ denotes the unique maximal ideal of $R$. Consequently, $V$ has always a well-defined versal deformation ring $R(\A,V)$ which is also universal provided that $\End_\A(V)$ is isomorphic to $\k$. It was also proved in \cite[Prop. 2.5]{blehervelez} that versal deformation rings are invariant under Morita equivalences between finite dimensional $\k$-algebras.

\begin{remark}\label{univ0}
\begin{enumerate}
\item It follows from the isomorphism of $\k$-vector spaces (\ref{hoch}) that if $\dim_\k \Ext_\A^1(V,V)=r$, then the versal deformation ring $R(\A,V)$ is isomorphic to a quotient algebra of the power series ring  $\k[\![t_1,\ldots,t_r]\!]$ and $r$ is minimal with respect to this property. In particular, if $V$ is a left $\A$-module such that $\Ext_\A^1(V,V)=0$, then $R(\A,V)$ is universal and isomorphic to $\k$ (see \cite[Remark 2.1]{bleher15} for more details).

\item Because of the continuity of the deformation functor as in (\ref{cont}), most of the arguments concerning $\widehat{\Fun}_V$ can be carried out for $\Fun_V$, and thus we are able to restrict ourselves to discuss liftings of $\A$-modules over Artinian objects in $\widehat{\Ca}$. 

\item Let $R$ be an Artinian ring in $\widehat{\Ca}$, let $\iota_R:\k\to R$ be the unique morphism in $\widehat{\Ca}$ endowing $R$ with a $\k$-algebra structure, and let $\pi_R:R\to \k$ be the natural projection in $\widehat{\Ca}$. Then $\pi_R\circ \iota_R =\mathrm{id}_\k$. 
\begin{enumerate}
\item For all projective left (resp. right) $\A$-modules $P$, we let $P_R=R\otimes_{\k,\iota_R}P=R\otimes_\k P$. Then $P_R$ is a projective left (resp. right) $R\A$-module cover of $P$, and $(P_R, \pi_{P,R})$ is a lift of $P$ over $R$, where $\pi_{P,R}$ is the natural isomorphism $\k\otimes_{R, \pi_R}P_R\to P$. 
\item Let $\alpha: P(V)\to V$ be a projective left $\A$-module cover of $V$ (which is unique up to isomorphism), and let $\Omega_\A V=\ker\alpha$. Then we obtain a short exact  sequence of left $\A$-modules 
\begin{equation}\label{projcover}
0\to \Omega_\A V\xrightarrow{\beta}P(V)\xrightarrow{\alpha} V\to 0.
\end{equation}

Let $(M,\phi)$ be a lift of $V$ over $R$. Since $P_R(V)=R\otimes_{\k, \iota_R}P(V)$ is a projective left  $R\A$-module cover of $P(V)$ by (i), and  since $\alpha$ is an essential epimorphism, there exists an epimorphism of $R\A$-modules $\alpha_R: P_R(V)\to M$ such that $\phi\circ (\mathrm{id}_\k\otimes\alpha_R)= \alpha\circ \pi_{P(V),R}$. Moreover, it follows by \cite[Claim 1]{bleher15} that $\alpha_R: P_R(V)\to M$ is a projective left $R\A$-module cover of $M$. Let  $\Omega_{R\A}M:=\ker \alpha_R$. Note that since $M$ and $P_R(V)$ are both free over $R$, then $\Omega_{R\A}M$ is also free over $R$, and that there exists an isomorphism of left $\A$-modules $\Omega_{R\A}(\phi):\k\otimes_R\Omega_{R\A}M\to \Omega_\A V$ such that $\pi_{P(V),R}\circ (\mathrm{id}_\k\otimes\beta_R)=\beta\circ \Omega_{R\A}(\phi)$, where $\beta:\Omega_\A V\to P(V)$ and $\beta_R:\Omega_{R\A}M\to P_R(V)$ are the natural inclusions. In particular, $(\Omega_{R\A}M, \Omega_{R\A}(\phi))$ is a lift of $\Omega_\A V$ over $R$.
\end{enumerate}
\end{enumerate}  
\end{remark}

\begin{remark}\label{rem2.1}
Let $\A$ and $V$ be as above, let $R$ be an object in $\hat{\Ca}$ and let $(M,\phi)$ be a lift of $V$ over $R$. Then the isomorphism class $[M]$ of $M$ as an $R\A$-module is called a {\it weak deformation} of $V$ over $R$ (see e.g. \cite[\S 5.2]{keller} and \cite[Remark 2.4]{blehervelez}). We can also define the weak deformation functor $\hat{\Fun}_V^w: \hat{\Ca}\to\Sets$ which sends an object $R$ in $\hat{\Ca}$ to the set of
weak deformations of $V$ over $R$  and a morphism  $\alpha:R \to R'$  in $\hat{\Ca}$ to the map  $\hat{\Fun}_V^w :\hat{\Fun}_V^w (R)\to \hat{\Fun}_V^w(R')$, which is defined by $\hat{\Fun}_V^w(\alpha)([M]) = [R'\otimes_{R,\alpha}M]$.
In general, a weak deformation of $V$ over $R$ identifies more lifts than a deformation of $V$ over $R$ that respects the isomorphism $\phi$ of a representative $(M,\phi)$. 
\end{remark}

\begin{remark}\label{rem2.2}
Assume that $V$ is an indecomposable Gorenstein-projective left $\A$-module with $\SEnd_\A(V)=\k$. 
\begin{enumerate}
\item  It follows by \cite[Thm. 1.2 (i)]{bekkert-giraldo-velez} that the deformation functor $\widehat{\Fun}_V$ is naturally isomorphic to the weak deformation functor $\widehat{\Fun}_V^w$ as in Remark \ref{rem2.1}. This implies that a deformation $[M,\phi]$ of $V$ over $R$ in $\widehat{\Ca}$ does not depend on the particular choice of the $\A$-module isomorphism $\phi$. More precisely, if $f:M\to M'$ is an $R\A$-module isomorphism with $(M',\phi')$ a lift of $V$ over $R$, then there exists an $R\A$-module isomorphism $\bar{f}:M\to M'$ such that $\phi'\circ (\mathrm{id}_\k\otimes _R\bar{f})=\phi$, i.e., $[M,\phi]=[M',\phi']$ in $\widehat{\Fun}_V(R)=\Def_\A(V,R)$. 
\item As already noted in \S\ref{int}, it follows by \cite[Thm. 1.2 (ii)]{bekkert-giraldo-velez} that the versal deformation ring is $R(\A,V)$ is universal. Moreover, if $P$ is a projective left $\A$-module, then the versal deformation ring $R(\A,V\oplus P)$ is universal and isomorphic to $R(\A,V)$. This result follows from the fact that for all Artinian objects $R$ in $\widehat{\Ca}$, there is a bijection of set of deformations
\begin{align}
\tau_{V\oplus P,R}: \Fun_{V}(R) \to \Fun_{V\oplus P}(R)
\end{align}
which for all lifts $(M,\phi)$ of $V$ over $R$, $\tau_{P,R}([M,\phi])=[M\oplus P_R, \phi\oplus \pi_{P,R}]$, where $(P_R,\pi_{P,R})$  is as in Remark \ref{univ0} (iii.a).
\end{enumerate}
\end{remark}



\subsection{Some results involving singular equivalences of Morita type with level and syzygies}\label{section4}
Recall that $\A$ denotes a finite dimensional $\k$-algebra and $V$ is a finitely generated left $\A$-module. 

\begin{remark}
Assume that $\A$ has a minimal projective resolution as a $\A$-$\A$-bimodule (or equivalently, as a left $\A^e$-module) given by
\begin{equation}\label{projcoverA}
\cdots \to P_i\to P_{i-1}\to \cdots \to P_2\to P_1\to P_0\to \A\to 0.
\end{equation}
Let $i\geq 1$ be fixed.  It follows  that there exists a short exact sequence of $\A$-$\A$-bimodules 
\begin{equation}\label{projcoverAe}
0\to \Omega^i_{\A^e}\A\xrightarrow{\iota_i} P_i\xrightarrow{\pi_i} \Omega_{\A^e}^{i-1}\A\to 0.
\end{equation}
\noindent
Tensoring (\ref{projcoverAe}) with $V$ over $\A$ yields an exact sequence of left $\A$-modules 

\begin{equation*}\label{projcoverAeV}
\Omega_{\A^e}^i\A\otimes_\A V\xrightarrow{\iota_i\otimes\mathrm{id}_V} P_i\otimes_\A V\xrightarrow{\pi_i\otimes\mathrm{id}_V} \Omega^{i-1}_{\A^e}\A\otimes_\A V\to 0.
\end{equation*}
Since $\iota_i: \Omega_{\A^e}^i\A\to P_i$ is a section in $\A^e$-mod, it follows that 
\begin{equation}\label{mono}
\iota_i\otimes\mathrm{id}_V: \Omega_{\A^e}^i\A\otimes_\A V\to P_i\otimes_\A V
\end{equation}
is also a section, and thus a monomorphism in $\A$-mod. Thus we obtain a short exact sequence of left $\A$-modules 
\begin{equation*}\label{projcoverAeV0}
0\to \Omega^i_{\A^e}\A\otimes_\A V\xrightarrow{\iota_i\otimes\mathrm{id}_V} P_i\otimes_\A V\xrightarrow{\pi_i\otimes\mathrm{id}_V} \Omega_{\A^e}^{i-1}\A\otimes_\A V\to 0.
\end{equation*}
Thus by \cite[Prop. IV. 8.1 (v)]{skow3}, it follows that  there is an isomorphism of left $\A$-modules 
\begin{equation}\label{sumproj}
\Omega_{\A^e}^i\A\otimes_\A V\cong \Omega_\A^i V\oplus T'_i,
\end{equation} 
where $T'_i$ is a finitely generated projective left $\A$-module. 
\end{remark}
\begin{remark}\label{rem2.5}
Let $\Gamma$ be another finite dimensional $\k$-algebra, and assume that $\A$ and $\Gamma$ are both Gorenstein. Assume that there exists $\ell\geq 0$ and a pair of bimodules $({_\Gamma}X_\A,{_\A}Y_\Gamma)$ that  induce a singular equivalence of Morita type with level $\ell$ between $\A$ and $\Gamma$ as in Definition \ref{defi:3.2}. 
\begin{enumerate}
\item It follows from \cite[Lemma. 3.6]{skart} that the functors 
\begin{align*}
X\otimes_\A-:\A\textup{-mod} \to \Gamma \textup{-mod}&& \text{ and } && Y\otimes_\Gamma-:\Gamma\textup{-mod} \to \A\textup{-mod} 
\end{align*}
send finitely generated Gorenstein-projective left modules to finitely generated Gorenstein-projective left modules. 
\item By \cite[Prop. 2.3]{zhouzimm} and \cite[Prop. 3.7]{skart} it follows that 
\begin{align*}
X\otimes_\A-:\A\textup{-\underline{Gproj}}\to \Gamma\textup{-\underline{Gproj}}&& \text{ and } && Y\otimes_\Gamma-:
\Gamma\textup{-\underline{Gproj}}\to \A\textup{-\underline{Gproj}} 
\end{align*}
are equivalences of triangulated categories that are quasi-inverses of each other.

\item There exist projective bimodules ${_\Gamma} Q_\Gamma$, ${_\Gamma} Q'_\Gamma$, ${_\A}P_\A$, and ${_\A}P'_\A$ such that.

\begin{enumerate}
\item $X\otimes_\A Y\oplus Q'\cong \Omega_{\Gamma^e}^\ell\Gamma\oplus Q$ as $\Gamma$-$\Gamma$-bimodules;
\item $Y\otimes_\Gamma X\oplus P'\cong \Omega_{\A^e}^\ell\A\oplus P$ as $\A$-$\A$-bimodules.
\end{enumerate}
\noindent
Assume that $V$ is as in Remark \ref{rem2.2}. Then by tensoring at both sides of (iii.b) with $V$ over $\A$ and by using (\ref{sumproj}), we obtain an isomorphism of left $\A$-modules
\begin{equation*}
Y\otimes_\Gamma X\otimes_\A V\oplus (P'\otimes_\A V)\cong \Omega_\A^\ell V\oplus T'_\ell\oplus (P\otimes_\A V),
\end{equation*}
where $P'\otimes_\A V$ and $P\otimes_\A V$ are projective left $\A$-modules. This implies that  $\Omega_\A^\ell V$ and $Y\otimes_\Gamma X\otimes_\A V$ are isomorphic  indecomposable objects in $\A$-\underline{Gproj}. In particular, $\Omega_\A^\ell V$ does not have projective direct summands, and thus by the Krull-Schmidt-Azumaya Theorem and by (i), we obtain that there exists a finitely generated projective left $\A$-module $T_\ell$ such that there is an isomorphism of left $\A$-modules
\begin{equation}\label{eqn2.9}
Y\otimes_\Gamma X\otimes_\A V\cong \Omega_\A^\ell V\oplus T_\ell. 
\end{equation} 

Similarly, if $W$ is an indecomposable Gorenstein-projective left $\Gamma$-module, it follows that there exists a projective left $\Gamma$-module $S_\ell$ such that there exists an isomorphism of left $\Gamma$-modules
\begin{equation}\label{eqn2.10}
X\otimes_\A Y\otimes_\Gamma W\cong \Omega_\Gamma^\ell W\oplus S_\ell. 
\end{equation} 

\end{enumerate}
\end{remark}

\section{Proof of Theorem \ref{thm1}}\label{sec3}

Assume throughout this section that $\A$ is a fixed but arbitrary finite dimensional $\k$-algebra and that $V$ is an indecomposable non-projective Gorenstein-projective left $\A$-module with $\SEnd_\A(V)=\k$. 

\subsection{Proof of Theorem \ref{thm1} (i)}

We first need to recall the following definition from \cite[Def. 1.2]{sch}.

\begin{definition}\label{defi3.7}
Let $\theta:R\to R_0$ be a morphism of Artinian objects in $\widehat{\Ca}$. We say that $\theta$ is a {\it small extension} if the kernel of $\theta$ is a non-zero principal ideal $tR$ that is annihilated by the unique maximal ideal $\m_R$ of $R$.
\end{definition}

\begin{lemma}\label{lemma2.4}
Assume that  $R$ is a fixed but arbitrary Artinian object in $\widehat{\Ca}$. 
\begin{enumerate}
\item Let $M$ be a finitely generated left $R\A$-module such that $\k\otimes_RM\cong V$ as left $\A$-modules. Then:
\begin{enumerate}
\item $M$ is a (maximal) Cohen-Macaulay left $R\A$-module, i.e., $\Ext_{R\A}^i(M,R\A)=0$ for all $i>0$;
\item $M$ is reflexive as a left $R\A$-module, i.e. $M$ and $\Hom_{R\A}(\Hom_{R\A}(M,R\A),R\A)$ are isomorphic as left $R\A$-modules.
\end{enumerate}
\item Let $U$ be a finitely generated left $R\A$-module which is free over $R$ such that $\k\otimes_RU\cong \Omega_\A V$ as left $\A$-modules. Then there exists a short exact sequence of left $R\A$-modules that are free over $R$
\begin{equation}\label{omegalift}
0\to U\xrightarrow{\beta_R}P_R(V)\xrightarrow{\alpha_R} L\to 0,
\end{equation}
such that the induced short exact sequence of left $\A$-modules
\begin{equation*}
0\to \k\otimes_RU\xrightarrow{\mathrm{id}_\k\otimes \beta_R}\k\otimes_RP_R(V)\xrightarrow{\mathrm{id}_\k\otimes \alpha_R} \k\otimes_RL\to 0,
\end{equation*}
is isomorphic to (\ref{projcover}).
\end{enumerate}
\end{lemma}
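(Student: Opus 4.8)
For part (i), the key observation is that $R\A$ is a Gorenstein $\k$-algebra whenever $\A$ is — but we do not even need $\A$ Gorenstein here since $R$ is Artinian and we argue directly with syzygies. First I would reduce to the case of a small extension: since $R$ is Artinian, there is a finite chain $R=R_n\to R_{n-1}\to\cdots\to R_0=\k$ of small extensions, and $\k\otimes_RM$ being $\cong V$ means $M$ is a lift-type module built over this tower. The plan is to prove by induction on the length of $R$ that $\Ext_{R\A}^i(M,R\A)=0$ for $i>0$; the base case $R=\k$ is exactly the hypothesis that $V$ is Gorenstein-projective, hence (maximal) Cohen-Macaulay by \cite[Prop. 4.1]{auslander3}. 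For the inductive step across a small extension $\theta:R\to R_0$ with kernel $tR\cong\k$, I would use that $M/tM\cong R_0\otimes_R M$ is a lift of $V$ over $R_0$ (after checking $M$ is free over $R$, which follows since $M$ lifts $V$ and one can propagate freeness up the tower), apply the long exact sequence in $\Ext_{R\A}^\bullet(-,R\A)$ associated to $0\to tM\to M\to M/tM\to 0$ together with the identification $tM\cong \k\otimes_\k(tR)\otimes\cdots$ as a module over $R_0\A$, and invoke the inductive hypothesis plus a change-of-rings spectral sequence (or the adjunction $\Ext_{R_0\A}^i(N,R_0\A)\cong\Ext_{R\A}^i(N,R\A)$ for $R_0\A$-modules $N$, using that $R_0\A$ is a quotient of $R\A$ by a principal ideal generated by a non-zerodivisor-type element) to kill the higher $\Ext$'s of $M$. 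Once (i.a) is established, (i.b) — reflexivity — follows by the standard fact that for a module over an Iwanaga–Gorenstein-type Artinian ring, vanishing of $\Ext^i(M,R\A)$ for all $i>0$ together with the analogous vanishing for $\Hom_{R\A}(M,R\A)$ forces $M$ to be totally reflexive; concretely I would take a projective $R\A$-resolution of $M$, dualize, and use that $M$ is free over $R$ to reduce the double-dual comparison to the known reflexivity of $V$ over $\A$ after applying $\k\otimes_R-$, then lift the isomorphism by Nakayama.

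For part (ii), the starting point is Remark \ref{univ0} (iii.b): one is handed $U$ free over $R$ with $\k\otimes_R U\cong\Omega_\A V$. I would form the projective cover $\alpha_R:P_R(V)\to ?$ by building the pushout/pullback square that expresses $U$ as a submodule of $P_R(V)$. More precisely, the plan is: choose a surjection $U\to \k\otimes_R U\cong\Omega_\A V\xrightarrow{\beta} P(V)$; since $P_R(V)=R\otimes_\k P(V)$ is $R$-free and projective over $R\A$, and since $U$ is $R$-free, lift the inclusion $\Omega_\A V\hookrightarrow P(V)$ to a map $\beta_R:U\to P_R(V)$ by projectivity/freeness considerations — specifically, $\beta_R$ is obtained because $\Ext^1$-obstructions vanish: $\Ext_{R\A}^1(U, \Omega_{R\A}(\text{something}))$ controls the lift, and here the relevant obstruction group vanishes because $U$ is Cohen–Macaulay by part (i) applied to $\Omega_\A V$ in place of $V$ (note $\Omega_\A V$ is again indecomposable non-projective Gorenstein-projective with $\SEnd_\A(\Omega_\A V)=\k$ — but careful, we may only know $\SEnd_\A(V)=\k$, so I would instead lift $\beta_R$ via the surjectivity of $\Hom_{R\A}(U,P_R(V))\to\Hom_\A(\Omega_\A V,P(V))$, which holds because $P_R(V)$ is $R$-free so $\Hom_{R\A}(U,P_R(V))=\Hom_R(U,R)\otimes_\k P(V)$ is $R$-free and base change to $\k$ is surjective). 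Then I set $L:=\mathrm{coker}\,\beta_R$; I must check $\beta_R$ is injective and $L$ is $R$-free. Injectivity: the cokernel $L$ base-changes to $\k\otimes_R L=\mathrm{coker}(\mathrm{id}_\k\otimes\beta_R)=V$ (once we know $\mathrm{id}_\k\otimes\beta_R$ has cokernel $V$, which we arrange by construction so that the bottom row is \eqref{projcover}); then by the local criterion for flatness — $L$ is $R$-finitely generated, $\mathrm{Tor}_1^R(\k,L)=0$ would give freeness — and $\mathrm{Tor}_1^R(\k,L)$ injects into $\k\otimes_R U=\Omega_\A V$ via the long exact sequence, landing in the kernel of $\mathrm{id}_\k\otimes\beta_R=\beta$, which is zero; hence $L$ is $R$-free and simultaneously $\beta_R$ is injective by a snake-lemma diagram chase comparing the sequence $0\to\mathrm{Tor}_1^R(\k,L)\to\k\otimes U\to\k\otimes P_R(V)\to\k\otimes L\to 0$ with \eqref{projcover}.

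The main obstacle I anticipate is part (i.a): the change-of-rings manipulation across a small extension requires care because $R\A$ need not have finite global dimension, so one cannot simply quote a flat-dimension bound — the argument must be genuinely homological, tracking the $\Ext$ long exact sequences and using that $tM$, as an $R_0\A$-module, is isomorphic to $\k\otimes_R M\cong V$ viewed over $R_0\A$ (since $t\m_R=0$), so that $\Ext_{R\A}^i(tM,R\A)$ can be computed and shown to vanish by the inductive hypothesis together with the self-duality-flavored identity $\Ext_{R\A}^i(V, R\A)\cong \Ext_{R_0\A}^{i-1}(V,R_0\A)\oplus(\text{lower terms})$ coming from the projective resolution of $R_0\A$ over $R\A$. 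Threading this exact sequence carefully — and in particular handling the boundary between $i=1$ and $i>1$ — is where the real work lies; everything in part (ii) is then a diagram chase powered by $R$-freeness and the local flatness criterion, and part (i.b) is a formal consequence of (i.a) plus Nakayama.
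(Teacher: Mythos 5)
Your treatment of (i.a) and (ii) is essentially the paper's argument in mirror image: the paper also inducts on small extensions $\theta\colon R\to R_0$, but it takes the long exact sequence of $\Ext_{R\A}(M,-)$ applied to $0\to tR\otimes_\k\A\to R\A\to R_0\A\to 0$ (resolving the ring) rather than of $\Ext_{R\A}(-,R\A)$ applied to $0\to tM\to M\to M/tM\to 0$ (resolving the module). One consequence of your choice: the identification $tM\cong V$ requires $M$ to be free over $R$, but part (i) makes no such assumption --- $M$ is an arbitrary finitely generated $R\A$-module with $\k\otimes_RM\cong V$ (for instance $M=V$ viewed via $R\A\to\A$), and your claim that freeness ``follows since $M$ lifts $V$'' is not available because $M$ is not assumed to be a lift. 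Without freeness, $tM$ is only a quotient of $V$ and your induction breaks. For (ii), your one-step lifting of $\beta$ and the $\mathrm{Tor}_1^R(\k,L)$/snake argument are sound and parallel the paper's proof (which lifts $\beta$ up the tower of small extensions and gets injectivity of $\beta_R$ from Nakayama); your displayed identification $\Hom_{R\A}(U,P_R(V))=\Hom_R(U,R)\otimes_\k P(V)$ is not correct as written, but the intended point --- surjectivity of reduction on $\Hom$'s because the obstruction lies in $\Ext^1_{R\A}(U,\ker(P_R(V)\to P_{R_0}(V)))\cong\Ext^1_\A(\Omega_\A V,P(V))=0$ --- is exactly the paper's.

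The genuine gap is in (i.b). The ``standard fact'' you invoke --- that $\Ext^i_{R\A}(M,R\A)=0=\Ext^i_{R\A}(M^{\ast},R\A)$ for all $i>0$ forces $M$ to be reflexive --- is false; this is precisely the distinction between (maximal) Cohen--Macaulay and totally reflexive that the introduction warns about (cf.\ the cited examples of Hoshino--Koga and Ringel--Zhang). The correct criterion (Auslander--Bridger, Prop.\ 3.8) is phrased via the transpose, not the dual: the kernel and cokernel of the biduality map $M\to M^{\ast\ast}$ are $\Ext^1_{R\A}(\mathrm{Tr}_{R\A}M,R\A)$ and $\Ext^2_{R\A}(\mathrm{Tr}_{R\A}M,R\A)$, and these are not controlled by $\Ext^{>0}(M^{\ast},R\A)$. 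The paper's proof runs the small-extension induction of (i.a) a second time on the right module $\mathrm{Tr}_{R\A}M$, using $\k\otimes_R\mathrm{Tr}_{R\A}M\cong\mathrm{Tr}_{\A}V$ and the fact that total reflexivity of $V$ gives $\Ext^i_\A(\mathrm{Tr}_\A V,\A)=0$ for all $i>0$; the Auslander exact sequence $0\to\Ext^1_{R\A}(\mathrm{Tr}_{R\A}M,R\A)\to M\to M^{\ast\ast}\to\Ext^2_{R\A}(\mathrm{Tr}_{R\A}M,R\A)\to 0$ then yields reflexivity. Your fallback (reduce biduality mod $\m_R$ and apply Nakayama) gives at best surjectivity of $M\to M^{\ast\ast}$ and presupposes that $(-)^{\ast_{R\A}}$ commutes with $\k\otimes_R-$, which is itself a nontrivial consequence of the $\Ext$-vanishing that you do not establish; injectivity of the biduality map is exactly where the transpose is needed.
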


\begin{proof}
(i.a). Let $R_0$ be an Artinian object in $\widehat{\Ca}$ such that $\theta: R\to R_0$ is a small extension as in Definition \ref{defi3.7}, and assume that for all finitely generated left $R_0\A$-modules such that $\k\otimes_{R_0}M_0\cong V$ as left $\A$-modules, we have $\Ext_{R_0\A}^i(M_0,R_0\A)=0$ for all $i>0$. Consider the short exact sequence of $R$-modules 
\begin{equation}\label{smallext}
0\to tR\to R\to R_0\to 0. 
\end{equation}
Tensoring (\ref{smallext}) with $\A$ over $\k$ yields a sequence of left $R\A$-modules
\begin{equation}\label{RAseq}
0\to tR\otimes_\k\A\to R\A\to R_0\A\to 0. 
\end{equation} 
Applying $\Hom_{R\A}(M,-)$ to (\ref{RAseq}) yields a long exact sequence of left $R\A$-modules 
\begin{equation*}
\cdots \to \Ext_{R\A}^i(M, tR\otimes_\k\A)\to \Ext_{R\A}^i(M,R\A)\to \Ext_{R\A}^i(M,R_0\A)\to \Ext_{R\A}^{i+1}(M, tR\otimes_\k\A)\to \cdots
\end{equation*}
Since $tR\cong \k$, it follows that $tR\otimes_\k\A\cong \A$. On the other hand, by using \cite[Prop. 1.8.31]{zimmermann}, the fact that $\k\otimes_RM\cong V$ is (maximal) Cohen-Macaulay, and by using the induction hypothesis, we obtain that for all $i>0$, $\Ext_{R\A}^i(M,tR\otimes_\k\A)\cong \Ext^i_\A(\k\otimes_RM,\A)=0$ and $\Ext^i_{R\A}(M,R_0\A)\cong \Ext_{R_0\A}^i(R_0\otimes_{R,\theta}M_0,R_0\A)=0$. Therefore, $\Ext_{R\A}^i(M,R\A) =0 $ for all $i>0$.

Note that in the proof of (i.a), we only use that $\Ext_\A^i(V,\A)=0$ for all $i>0$. 

(i.b). Let $P_1\xrightarrow{d_1}P_0\xrightarrow{d_0} V\to 0$ be a minimal projective presentation of $V$ as a left $\A$-module. Then by using Remark \ref{univ0} (iii.b) repeatedly, we obtain a minimal projective presentation of $M$ as a left $R\A$-module
\begin{equation}\label{minR}
(P_1)_R\xrightarrow{(d_1)_R}(P_0)_R\xrightarrow{(d_0)_R} M\to 0,
\end{equation}
where $\k\otimes_R(P_i)_R\cong P_i$ for $i=0,1$. Applying $\Hom_{R\A}(-,R\A)=(-)^{\ast_{R\A}}$ to (\ref{minR}) yields the exact sequence of right $R\A$-modules
\begin{equation*}
(P_0)_R^{\ast_{R\A}}\xrightarrow{(d_1)_R^{\ast_{R\A}}}(P_1)_R^{\ast_{R\A}}\to \mathrm{Tr}_{R\A}M\to 0,
\end{equation*}
where $\mathrm{Tr}_{R\A}M:=\mathrm{coker}\,(d_1)_R^{\ast_{R\A}}$ is the transpose of $M$ (see e.g. \cite[\S IV.1]{auslander}). By \cite[Prop. 6.3]{auslander6} (see also \cite[Intro.]{auslander4}), there exists an exact sequence of left $R\A$-modules
\begin{equation}\label{ausexact}
0\to \Ext_{R\A}^1(\mathrm{Tr}_{R\A}M,R\A)\to M\to M^{\ast_{R\A}\ast_{R\A}}\to \Ext_{R\A}^2(\mathrm{Tr}_{R\A}M,R\A)\to 0.
\end{equation}
On the other hand, since $\k\otimes_R (P_i)_R^{\ast_{R\A}}\cong \Hom_\A(P_i,\A)$ for $i=0,1$, it follows that $\k\otimes_R \mathrm{Tr}_{R\A}M\cong \mathrm{Tr}_\A V$. Since $V$ is also totally reflexive, it follows by \cite[Prop. 3.8]{auslander4} that $\Ext_\A^i(\mathrm{Tr}_\A V,\A)=0$ for all $i>0$. Thus by using the dual arguments in the proof of (i.a) for the right $R\A$-module $\mathrm{Tr}_{R\A}M$, we obtain $\Ext_{R\A}^i(\mathrm{Tr}_{R\A}M,R\A)=0$ for all $i>0$, which together with (\ref{ausexact}) implies that $M\to M^{\ast_{R\A}\ast_{R\A}}$ is an isomorphism of $R\A$-modules. 

(ii). In the following, we adapt some of the arguments in the proof of \cite[Prop. 2.4]{bleher2} to our situation. As in the proof of (i.a), let $R_0$ be an Artinian object in $\widehat{\Ca}$ such that $\theta: R\to R_0$ is a small extension as in Definition \ref{defi3.7}, and assume that for all finitely generated left $R_0\A$-modules $U_0$ that are free over $R_0$ and satisfy that $\k\otimes_{R_0}U_0\cong \Omega_\A V$ as left $\A$-modules, there exists a monomorphism of left $R_0\A$-modules $\beta_{R_0}:U_0\to P_{R_0}(V)$ such that $\mathrm{id}_\k\otimes \beta_{R_0}= \beta$, where $\beta$ is the monomorphism of left $\A$-modules as in (\ref{projcover}).  
Tensoring (\ref{smallext}) with $P_R(V)$ yields an exact sequence of left $R\A$-modules 
\begin{equation*}
0\to tR\otimes_RP_R(V)\to P_R(V)\to P_{R_0}(V)\to 0.
\end{equation*}
Since $tR\cong \k$, it follows that $tR\otimes_RP_R(V)\cong P(V)$, and together with the fact that $\k\otimes_R U\cong  \Omega_\A V$ is a Gorenstein-projective left $\A$-module, we obtain that $\Ext_{R\A}^1(U,tR\otimes_RP_R(V))\cong \Ext_{\A}^1(\k\otimes_R U,P(V))=0$. On the other hand, $\Hom_{R\A}(U,R_0\A)\cong \Hom_{R_0\A}(U_0,R_0\A)$, where $U_0=R_0\otimes_{R,\theta}U$ is a finitely generated $R_0\A$-module that is free over $R_0$ and which satisfies $\k\otimes_{R_0} U_0\cong \Omega_{\A}V$. Thus we obtain a short exact sequence
\begin{equation*}
0\to \Hom_{\A}(\k\otimes_RU,P(V))\to \Hom_{R\A}(U,P_R(V))\to \Hom_{R_0\A}(U_0,P_{R_0}(V))\to 0.
\end{equation*}
Let $\beta_{R_0}: U_0\to P_{R_0}(V)$ be a monomorphism of left $R_0\A$-modules such that $\mathrm{id}_\k\otimes \beta_{R_0}= \beta$. Then there exists $\beta_R: U\to P_R(V)$ such that $\mathrm{id}_{R_0}\otimes \beta_R= \beta_{R_0}$. Therefore, $\mathrm{id}_\k\otimes\beta_R=\beta$ and since $\beta$ is a monomorphism of left $\A$-modules, it follows by Nakayama's Lemma that $\beta_R: U\to P_R(V)$ is also a monomorphism of left $R\A$-modules. By letting $L=\mathrm{coker}\, \beta_R$, we obtain a short exact sequence of left $R\A$-modules as in (\ref{omegalift}). Note that since $U$ and $P_R(V)$ are both free over $R$, it follows that $L$ is also free over $R$. Moroever, since $\k\otimes_R U\cong \Omega_{\A}V$ and $\k\otimes_RP_R(V)\cong V$, it follows that $\k\otimes_R L\cong V$ as left $\A$-modules. This finishes the proof of Lemma \ref{lemma2.4}.

\end{proof}

Let $R$ be a fixed Artinian object in $\widehat{\Ca}$ and let $(M,\phi)$ be a lift of $V$ over $R$. Then by Remark \ref{univ0} (iii.b), we obtain that $(\Omega_{R\A}M, \Omega_{R\A}\phi)$ is a lift of $\Omega_\A V$ over $R$. Thus we obtain a map between set of deformations
\begin{equation*}
\tau_{\Omega_\A V,R}: \Fun_V(R)\to \Fun_{\Omega_\A V}(R)
\end{equation*}
defined as $\tau_{\Omega_\A V,R}([M,\phi])=[\Omega_{R\A}M,\Omega_{R\A}\phi]$ for all $[M,\phi]\in \Fun_V(R)$.  

Assume that $[M,\phi]=[M',\phi']$ in $\Fun_V(R)$. Then there is an isomorphism of left $R\A$-modules $f:M\to M'$ such that $\phi'\circ (\mathrm{id}_\k\otimes f)=\phi$. In particular, we obtain an isomorphism of left $R\A$-modules $\Omega_{R\A}f: \Omega_{R\A}M\to \Omega_{R\A}M'$. By Remark \ref{rem2.2} (i), it follows that $[\Omega_{R\A}M,\Omega_{R\A}\phi]= [\Omega_{R\A}M',\Omega_{R\A}\phi']$ in  $\Fun_{\Omega_\A V}(R)$, which proves that $\tau_{\Omega_\A V,R}$ is well-defined. Next let $(U,\varphi)$ be a lift of $\Omega_\A V$ over $R$. Then by Lemma \ref{lemma2.4} (ii), there exists a lift $(L,\psi)$ of $V$ over $R$ such that $[\Omega_{R\A}L,\Omega_{R\A}\psi]=[U,\varphi]$. This proves that $\tau_{\Omega_\A V,R}$ is surjective. In order to prove that $\tau_{\Omega_\A V,R}$ is injective, we adjust (as before) some of the arguments in the proof of \cite[Prop. 2.4]{bleher2} to our situation. Namely,  assume  that $[M_1,\phi]$ and $[M_2,\phi_2]$ are lifts of $V$ over $R$ such that $[\Omega_{R\A}M_1,\Omega_{R\A}\phi_1]=[\Omega_{R\A}M_2,\Omega_{R\A}\phi_2]$ in $\Fun_{\Omega_\A V}(R)$. Then for $i=1,2$, there exists a short exact sequence of left $R\A$-modules
\begin{equation*}
0\to \Omega_{R\A}M_i\to P_R(V)\to M_i\to 0.
\end{equation*}
Since by Lemma \ref{lemma2.4} (i.a) we have that $\Ext_{R\A}^1(M_i,R\A)=0$ for $i=1,2$, we obtain a short exact sequence of right $R\A$-modules
\begin{equation*}
0\to M_i^{\ast_{R\A}}\to (P_R(V))^{\ast_{R\A}}\to (\Omega_{R\A}M_i)^{\ast_{R\A}}\to 0,
\end{equation*}
where $(P_R(V))^{\ast_{R\A}}$ is a projective right $R\A$-module. Since $R\A$ is an Artinian $R$-algebra on both sides, it follows by Schanuel's Lemma and the Krull-Schmidt-Azumaya Theorem that $M_1^{\ast_{R\A}}\cong M_2^{\ast_{R\A}}$ as right $R\A$-modules and thus by Lemma \ref{lemma2.4} (i.b) it follows that $M_1\cong M_1^{\ast_{R\A}\ast_{R\A}}\cong M_2^{\ast_{R\A}\ast_{R\A}}\cong M_2$ as left $R\A$-modules. Therefore by Remark \ref{rem2.2} (i) we obtain that $[M_1,\phi_1]=[M_2,\phi_2]$ in $\Fun_V(R)$. This proves that $\tau_{\Omega_\A V,R}$ is injective. Finally, let $\theta: R\to R'$ be a morphism between Artinian rings in $\widehat{\Ca}$. Then it is straightforward to prove that $\Omega_{R'}(R'\otimes_{R,\theta}M)\cong R'\otimes_{R,\theta}\Omega_{R\A}M$ as left $R'\A$-modules. Thus by using Remark \ref{rem2.2} (i) again, we obtain that $\tau_{\Omega_\A V,R}$ is natural with respect to morphisms between Artinian objects in $\widehat{\Ca}$. 

The continuity of the deformation functor (see Remark \ref{univ0} (ii)) implies that for all objects $R$ in $\widehat{\Ca}$, there is a bijection between sets of deformations
\begin{equation*}
\widehat{\tau}_{\Omega_\A V,R}: \widehat{\Fun}_V(R)\to \widehat{\Fun}_{\Omega_\A V}(R),
\end{equation*}
which is natural with respect of morphisms between objects in $\widehat{\Ca}$. This implies that the universal deformation rings $R(\A,V)$ and $R(\A,\Omega_\A V)$ are isomorphic in $\widehat{\Ca}$. This finishes the proof of Theorem \ref{thm1} (i). 

The following remark will be useful in the proof of Theorem \ref{thm1} (ii).

\begin{remark}\label{rem3.3}
Let $R$ be an Artinian object in $\widehat{\Ca}$. By using the arguments above, it follows that for all $i\geq 1$, there is a bijection of set of deformations 
\begin{equation}\label{eqn3.6}
\tau_{\Omega_\A^i V,R}: \Fun_V(R)\to \Fun_{\Omega_\A^i V}(R),
\end{equation}
which is natural with respect to morphisms between Artinian objects in $\widehat{\Ca}$. 
\end{remark}

\subsection{Proof of Theorem \ref{thm1} (ii)}

Assume that $R$ is a fixed but arbitrary Artinian object in $\widehat{\Ca}$.

\begin{lemma}\label{lemma3.4}
Let $(M,\phi)$ be a lift of $V$ over $R$. Then for all $i\geq 0$, there is an isomorphism of left $R\A$-modules
\begin{equation}\label{sumprojR}
\Omega_{(R\A)^e}^i(R\A)\otimes_{R\A}M\cong \Omega_{R\A}^iM\oplus (T'_i)_R
\end{equation} 
where $\Omega_{R\A}^iM$ is as in Remark \ref{univ0} (iii.b), $T'_i$ is as in (\ref{sumproj}), and $(T'_i)_R$ is the projective left $R\A$-module as in Remark \ref{univ0} (iii.a).
\end{lemma}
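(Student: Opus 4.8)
The plan is to lift the bimodule computation recorded in \eqref{sumproj} to the deformation ring $R$, using the fact that the minimal projective bimodule resolution of $\A$ from \eqref{projcoverA} base-changes, after tensoring with $R$ over $\k$, to a projective bimodule resolution of $R\A$ over $(R\A)^e$. Concretely, I would first observe that since $R\A = R\otimes_\k\A$ and $(R\A)^e = R\otimes_\k \A^e$, applying the exact functor $R\otimes_\k -$ to \eqref{projcoverA} yields an exact complex $\cdots\to R\otimes_\k P_i\to\cdots\to R\otimes_\k P_0\to R\A\to 0$ of $(R\A)^e$-modules, and each $R\otimes_\k P_i$ is a projective $(R\A)^e$-module (being $R$ tensored with a projective $\A^e$-module). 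Splicing this resolution gives, for each $i\ge 1$, a short exact sequence of $R\A$-$R\A$-bimodules
\begin{equation*}
0\to \Omega_{(R\A)^e}^i(R\A)\to R\otimes_\k P_i\to \Omega_{(R\A)^e}^{i-1}(R\A)\to 0,
\end{equation*}
where I identify $\Omega_{(R\A)^e}^i(R\A)$ with $R\otimes_\k \Omega_{\A^e}^i\A$ up to projective summands (or simply take this spliced kernel as a model, since the lemma is insensitive to projective summands once one uses Krull–Schmidt). This sequence is split as a sequence of left $(R\A)^e$-modules, exactly as in the paragraph containing \eqref{mono}, because the inclusion of a syzygy into the projective above it in a minimal resolution is a section after passing to the $(R\A)^e$-module structure — here one uses that $R\A$ is an Artinian $R$-algebra so that Krull–Schmidt and the splitting argument of Remark before \eqref{sumproj} apply verbatim.

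Next I would tensor this short exact sequence with $M$ over $R\A$. Since the sequence is split as left $(R\A)^e$-modules, the map $\Omega_{(R\A)^e}^i(R\A)\otimes_{R\A}M\to (R\otimes_\k P_i)\otimes_{R\A}M$ remains a section, hence a monomorphism, so we get a short exact sequence of left $R\A$-modules
\begin{equation*}
0\to \Omega_{(R\A)^e}^i(R\A)\otimes_{R\A}M\to (R\otimes_\k P_i)\otimes_{R\A}M\to \Omega_{(R\A)^e}^{i-1}(R\A)\otimes_{R\A}M\to 0,
\end{equation*}
and $(R\otimes_\k P_i)\otimes_{R\A}M$ is a projective left $R\A$-module (it is $R$ tensored with $P_i\otimes_\A(\k\otimes_R M)$ up to the usual identifications, or more directly: tensoring a projective $(R\A)^e$-module with $M$ over $R\A$ produces a projective left $R\A$-module). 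Comparing this, by induction on $i$ via \cite[Prop. IV.8.1 (v)]{skow3} (Schanuel's Lemma) applied over the Artinian ring $R\A$, with the defining short exact sequences $0\to\Omega_{R\A}^i M\to (P_i')_R\to\Omega_{R\A}^{i-1}M\to 0$ coming from a projective resolution of $M$, we deduce $\Omega_{(R\A)^e}^i(R\A)\otimes_{R\A}M\cong \Omega_{R\A}^i M\oplus (\text{projective})$. The base case $i=0$ is the canonical isomorphism $R\A\otimes_{R\A}M\cong M$.

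It remains to identify the projective summand with $(T_i')_R$. For this I would reduce modulo the maximal ideal: applying $\k\otimes_R -$ to \eqref{sumprojR} gives, using $\k\otimes_R\Omega_{R\A}^i M\cong\Omega_\A^i V$ (Remark \ref{univ0} (iii.b), applied $i$ times to the lift $(M,\phi)$) and $\k\otimes_R\big(\Omega_{(R\A)^e}^i(R\A)\otimes_{R\A}M\big)\cong \Omega_{\A^e}^i\A\otimes_\A V$, exactly the isomorphism \eqref{sumproj}, hence the mod-$\m_R$ reduction of the projective summand is $T_i'$; since a projective $R\A$-module is determined by its reduction mod $\m_R$ (projective covers are unique and $R\A$ is Artinian, so $\k\otimes_R(T_i')_R\cong T_i'$ forces the summand to be $(T_i')_R$ by Remark \ref{univ0} (iii.a) together with Krull–Schmidt–Azumaya). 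The main obstacle I anticipate is the bookkeeping around the "up to projective summands" ambiguities: one must be careful that the inductive Schanuel argument over $R\A$ really does produce a \emph{projective} (not merely Gorenstein-projective) complement and that no non-projective part is absorbed, but this is controlled because every term $(R\otimes_\k P_i)\otimes_{R\A}M$ is genuinely projective over $R\A$ and $R\A$ has the Krull–Schmidt property, so the cancellation is legitimate.
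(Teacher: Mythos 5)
Your overall strategy coincides with the paper's: base-change the minimal bimodule resolution \eqref{projcoverA} along $R\otimes_\k-$, splice it into short exact sequences, tensor with $M$ over $R\A$, run an induction identifying the resulting kernel with the next syzygy of $M$ up to a projective complement, and pin that complement down by reducing modulo $\m_R$ against \eqref{sumproj}. However, one step is justified by a false claim. You assert that the spliced sequence $0\to \Omega_{(R\A)^e}^i(R\A)\to (P_i)_R\to \Omega_{(R\A)^e}^{i-1}(R\A)\to 0$ splits as a sequence of $(R\A)^e$-modules because ``the inclusion of a syzygy into the projective above it in a minimal resolution is a section.'' This is never true in a minimal resolution: if it were, every bimodule syzygy of $R\A$ would be a projective bimodule and the whole lemma would be vacuous. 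What is true, and what the argument around \eqref{mono} actually relies on, is that the bimodule syzygies of $R\A$ are projective as \emph{one-sided} modules, so the sequence splits as a sequence of right $R\A$-modules and therefore remains exact after applying $-\otimes_{R\A}M$. (The paper instead reduces the map modulo $\m_R$, recognizes the monomorphism \eqref{mono}, and uses Nakayama together with $R$-freeness of all terms.) Either fix is short, but your stated reason would not survive scrutiny.

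A secondary point: your inductive step via Schanuel's lemma proper yields an isomorphism of the form $\Omega_{(R\A)^e}^i(R\A)\otimes_{R\A}M\oplus(\text{proj})\cong\Omega_{R\A}^iM\oplus(\text{proj})$, and extracting the asserted direct-sum decomposition from this requires cancelling projective summands in a direction that Krull--Schmidt alone does not license (one must show that any projective summand of $\Omega_{R\A}^iM$ actually occurs in the left-hand module). The paper sidesteps this by using the semiperfect form of the statement you cite, \cite[Prop. IV.8.1(v)]{skow3}: over the Artinian ring $R\A$, the kernel of \emph{any} epimorphism from a projective module onto $N$ is isomorphic to $\Omega_{R\A}N$ plus a projective complement. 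Applied with $N=\Omega_{(R\A)^e}^{i-1}(R\A)\otimes_{R\A}M\cong\Omega_{R\A}^{i-1}M\oplus(T'_{i-1})_R$ this gives the decomposition directly, with no cancellation, since $\Omega_{R\A}$ kills the projective summand $(T'_{i-1})_R$. Your final identification of the projective complement with $(T'_i)_R$ by reduction modulo $\m_R$ and uniqueness of projective lifts agrees with the paper.
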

\begin{proof}
Consider the minimal projective resolution of $\A$ as a $\A$-$\A$-bimodule in (\ref{projcoverA}). Tensoring with $R$ over $\k$ yields a minimal projective resolution of $R\A$ as an $R\A$-$R\A$-bimodule
\begin{equation}\label{projcoverRA}
\cdots \to (P_i)_R\to (P_{i-1})_R\to \cdots \to (P_2)_R\to (P_1)_R\to (P_0)_R\to R\A\to 0,
\end{equation}
where for all $i\geq 0$, $(P_i)_R$ is as in Remark \ref{univ0} (iii.a). Let $i\geq 0$ be fixed but arbitrary, and consider the short exact sequence of $R\A$-$R\A$-bimodules 
\begin{equation}\label{projcoverRAe}
0\to \Omega_{(R\A)^e}^i(R\A)\xrightarrow{(\iota_i)_R}(P_i)_R\xrightarrow{(\pi_i)_R} \Omega_{(R\A)^e}^{i-1}(R\A)\to 0.
\end{equation}
\noindent
Tensoring (\ref{projcoverRAe}) with $M$ over $R\A$ yields an exact sequence 

\begin{equation}\label{projcoverRAeM}
\Omega_{(R\A)^e}^i(R\A)\otimes_{R\A}M\xrightarrow{(\iota_i)_R\otimes\mathrm{id}_M}(P_i)_R\otimes_{R\A}M\xrightarrow{(\pi_i)_R\otimes\mathrm{id}_M} \Omega_{(R\A)^e}^{i-1}(R\A)\otimes_{R\A}M\to 0,
\end{equation}
where $(P_i)_R\otimes_{R\A}M$ is a projective left $R\A$-module. 
Note that tensoring the morphism 
\begin{equation*}
(\iota_i)_R\otimes\mathrm{id}_M: \Omega_{(R\A)^e}^i(R\A)\otimes_{R\A}M\xrightarrow{(\iota_i)_R\otimes\mathrm{id}_M}(P_i)_R\otimes_{R\A}M
\end{equation*}
with $\mathrm{id}_\k$ over $R$, induces the monomorphism (\ref{mono}), and thus by Nakayama's Lemma, we obtain that $(\iota_i)_R\otimes\mathrm{id}_M$ is a monomorphism. Thus we obtain a short exact sequence of left $R\A$-modules
\begin{equation*}
0\to \Omega_{(R\A)^e}^i(R\A)\otimes_{R\A}M\xrightarrow{(\iota_i)_R\otimes\mathrm{id}_M}(P_i)_R\otimes_{R\A}M\xrightarrow{(\pi_i)_R\otimes\mathrm{id}_M} \Omega_{(R\A)^e}^{i-1}(R\A)\otimes_{R\A}M\to 0.
\end{equation*}
\noindent
Note that $\Omega_{(R\A)^e}^{i-1}(R\A)\otimes_{R\A}M$ induces a lift of $\Omega_{\A^e}^{i-1}\A\otimes_\A V$ over $R$.
This together with the discussion in Remark \ref{univ0} (iii.b) implies that $\Omega_{(R\A)^e}^i(R\A)\otimes_{R\A}M\cong \Omega_{R\A}(\Omega_{(R\A)^e}^{i-1}(R\A)\otimes_{R\A}M)\oplus Z_i$ for some projective left $R\A$-module $Z_i$.  Note that since (\ref{sumprojR}) is trivially true for when $i=0$, we can assume by induction that for all $0\leq j < i$, $\Omega_{(R\A)^e}^{j}(R\A)\otimes_{R\A}M\cong \Omega_{R\A}^{j}M\oplus (T'_{j})_R$, where $T'_{j}$ is as in (\ref{sumproj}). Thus 
\begin{align}\label{projcoverRAi}
\Omega_{(R\A)^e}^i(R\A)\otimes_{R\A}M&\cong \Omega_{R\A}(\Omega_{R\A}^{i-1}M\oplus (T'_{i-1})_R)\oplus Z_i\cong\Omega_{R\A}^iM\oplus Z_i. 
\end{align}
Note that the isomorphism (\ref{projcoverRAi}) of left $R\A$-modules implies that $Z_i$ induces a lift of $T'_i$ over $R$, where $T'_i$ is as in (\ref{sumproj}). Thus $Z_i$ is isomorphic to $(T'_i)_R$. This finishes the proof of Lemma  \ref{lemma3.4}. 
\end{proof}
 
In the following, assume further that  $\A$, $\Gamma$, $\ell$, ${_\Gamma}X_\A$, ${_\A}Y_\Gamma$, $P$, $P'$, $Q$ , and $Q'$ are all as in Remark \ref{rem2.5}. 
\begin{remark}\label{rem3.5}
It follows that $X_R=R\otimes_\k X$ is projective as a left $R\Gamma$-module and as a right $R\A$-module, and $Y_R=R\otimes_\k Y$ is projective 
as a left $R\A$-module and as a right $R\Gamma$-module, and both are free over $R$. Note also that $X_R\otimes_{R\A}Y_R\cong R\otimes_\k(X\otimes_\A Y)$ as $R\Gamma$-$R\Gamma$-bimodules and $Y_R\otimes_{R\Gamma} X_R\cong R\otimes_\k (Y\otimes_\A X)$ as $R\A$-$R\A$-bimodules. Moreover, we also have that 
\begin{align*}
Y_R\otimes_{R\Gamma} X_R\oplus P'_R&\cong \Omega_{R\A^e}^\ell(R\A) \oplus P_R&&\text{ as $R\A$-$R\A$-bimodules, and}\\
X_R\otimes_{R\A} Y_R\oplus Q'_R&\cong \Omega_{R\A^e}^\ell(R\Gamma)\oplus Q_R&&\text{ as $R\Gamma$-$R\Gamma$-bimodules},
\end{align*}
where $P_R=R\otimes_\k P$ and $P'_R=R\otimes_\k P'$ (resp. $Q_R=R\otimes_\k Q$ and $Q'_R=R\otimes_\k Q'$) are projective  $R\A$-$R\A$-bimodules (resp. $R\Gamma$-$R\Gamma$-bimodules).
\end{remark}

Let $(M,\phi)$ be a lift of $V$ over $R$. Since $V$ is assumed to be indecomposable, we obtain by using Remark \ref{rem3.5} together with Lemma \ref{lemma3.4} that there is an isomorphism of $R\A$-$R\A$-bimodules
\begin{equation}\label{eqn3.11}
Y_R\otimes_{R\Gamma}X_R\otimes_{R\A} M\cong \Omega_{R\A}^\ell M\oplus (T_\ell)_R, 
\end{equation}
where $T_\ell$ is as in (\ref{eqn2.9}). Note also that $X_R\otimes_{R\A} M$ is free over $R$, and that there exists an isomorphism of left $\Gamma$-modules $\phi_{X_R\otimes_{R\A} M}: \k\otimes_R(X_R\otimes_{R\A} M)\to X\otimes_\A V$.  Thus we can define a morphism between sets of deformations 
\begin{equation}
\tau_{X\otimes_\A V, R}: \Fun_V(R)\to\Fun_{X\otimes_\A V}(R) 
\end{equation}
as follows. For all deformations $[M,\phi]$ of $V$ over $R$, let $\tau_{X\otimes_\A V, R}([M,\phi])=[X_R\otimes_{R\A}M,\phi_{X_R\otimes_{R\A}M}]$. Let $(M,\phi)$ and $(M',\phi')$ be lifts of $V$ over $R$ such that $[M,\phi]=[M',\phi']$ in $\Fun_V(R)$. It follows that there is an isomorphism of left $R\Gamma$-modules $g: X_R\otimes_{R\A}M\to X_R\otimes_{R\A}M'$. Note that by  Remark \ref{rem2.5} (ii), we obtain that $X\otimes_\A V$ is an indecomposable non-projective Gorenstein-projective left $\Gamma$-module with $\SEnd_{\Gamma}(X\otimes_\A V)=\k$. Thus by Remark \ref{rem2.2} (i), we obtain that $[X_R\otimes_{R\A} M, \phi_{X_R\otimes_{R\A} M}]=[X_R\otimes_{R\A} M', \phi_{X_R\otimes_{R\A} M'}]$ in $\Fun_{X\otimes_\A V}(R)$. This proves that $\tau_{X\otimes_\A V,R}$ is well-defined. Next assume that $[N,\varphi]$ is a deformation of $X\otimes_\A V$ over $R$. If we let $L=Y_R\otimes_{R\Gamma} N$, then $L$ is free over $R$ and there is a composition of isomorphisms between left $\A$-modules which we denote by $\phi_L$ and which is given as follows:
\begin{align*}
\k\otimes_RL= \k\otimes_R(Y_R\otimes_{R\Gamma} N)&\cong (\k\otimes_R Y_R)\otimes_\Gamma (\k\otimes_R N)\\ 
&\cong Y\otimes_\Gamma (X\otimes_\A V)\\
&\cong \Omega_{\A}^\ell V\oplus T_\ell,
\end{align*}
where the last isomorphism follows from (\ref{eqn2.9}). In particular, $(L,\phi_L)$ is a lift of $\Omega_\A^\ell V\oplus T_\ell$ over $R$. Thus
by Remark \ref{rem2.2} (ii), there exists a lift $(L',\phi_{L'})$ of $\Omega_\A^\ell V$ over $R$ such that $L'\oplus (T_\ell)_R\cong L$ as left $R\A$-modules. On the other hand, by Remark \ref{rem3.3}, there exists a lift $(L'',\phi_{L''})$ of $V$ over $R$ such that $\Omega_{R\A}^\ell L''\cong L'$ as left $R\A$-modules. Therefore, $Y_R\otimes_{R\Gamma} N\cong \Omega_{R\A}^\ell L''\oplus (T_\ell)_R$ as left $R\A$-modules.  Note that we also have that $Y_R\otimes_{R\Gamma}X_R\otimes_{R\A} L''\cong \Omega_{R\A}^\ell L'\oplus (T_\ell)_R$ as left $R\A$-modules. Thus we obtain an isomorphism of left $R\A$-modules $Y_R\otimes_{R\Gamma} N\cong Y_R\otimes_{R\Gamma}X_R\otimes_{R\A} L''$, which induces a composition of isomorphisms between left $R\Gamma$-modules as follows:
\begin{align*}
\Omega_{\Gamma}^\ell N\oplus (S_\ell)_R &\cong X_R\otimes_{R\A}Y_R\otimes_{R\Gamma} N\\
&\cong X_R\otimes_{R\A}Y_R\otimes_{R\Gamma}X_R\otimes_{R\A} L''\\
& \cong \Omega_{R\Gamma}^\ell(X_R\otimes_{R\A} L'')\oplus (S_\ell)_R,
\end{align*}
where $S_\ell$ is as in (\ref{eqn2.10}). Thus $\Omega_{R\Gamma}^\ell N\cong \Omega_{R\Gamma}^\ell (X_R\otimes_{R\A} L'')$ as left $R\Gamma$-modules. By Remark \ref{rem3.3} (applied to $X\otimes_\A V$) together with Remark \ref{rem2.2} (i), we obtain that $N\cong X_R\otimes_{R\A}L''$ as left $R\Gamma$-modules and that $[N,\varphi]= [X_R\otimes_{R\A}L'',\phi_{X_R\otimes_{R\A}L''}]$ in $\Fun_{X\otimes_\A V}(R)$. This proves that $\tau_{X\otimes_\A V, R}$ is surjective. Next assume that $[M,\phi]$ and $[M',\phi']$ are deformations of $V$ over $R$ such that $[X_R\otimes_{R\A}M,\phi_{X_R\otimes_{R\A}M}]=[X_R\otimes_{R\A}M',\phi_{X_R\otimes_{R\A}M'}]$. In particular, assume that there exists an isomorphism of left $R\Gamma$-modules $f: X_R\otimes_{R\A}M\to X_R\otimes_{R\A}M'$. Thus we obtain an isomorphism of left $R\A$-modules $\mathrm{id}_{Y_R}\otimes f: Y_R\otimes_{R\Gamma}X_R\otimes_{R\A} M\to Y_R\otimes_{R\Gamma}X_R\otimes_{R\A} M'$. By (\ref{eqn3.11}), we obtain that there exists an isomorphism of left $R\A$-modules $\Omega_{R\A}^\ell f: \Omega_{R\A}^\ell M\to \Omega_{R\A}^\ell M'$ such that $\mathrm{id}_\k \otimes {\Omega_{R\A}^\ell f} =\mathrm{id}_{\Omega_\A^\ell V}$. This together with Remark \ref{rem2.2} (i) implies that $[\Omega_{R\A}^\ell M,\Omega_{R\A}^\ell \phi]= [\Omega_{R\A}^\ell M',\Omega_{R\A}^\ell\phi']$ in $\Fun_{\Omega_\A^\ell V}(R)$, which together with Remark \ref{rem3.3} implies that $[M,\phi] = [M',\phi']$ in $\Fun_V(R)$. This proves that $\tau_{X\otimes_\A V, R}$ is injective. Next assume that $\theta: R\to R'$ is a morphism of Artinian objects in $\widehat{\Ca}$. Let $(M,\phi)$ a lift of $V$ over $R$. Then there is a composition of left $R'\A$-modules as follows:
\begin{align*}
R'\otimes_{R,\theta} (X_R\otimes_{R\A} M)\cong (R'\otimes_{R,\theta}X_R)\otimes_{R'\otimes_{R,\theta}R\A}(R'\otimes_{R,\theta}M)\cong X_{R'}\otimes_{R'\A}M',
\end{align*} 
where $X_{R'}=R'\otimes_\k X$ and $M'= R'\otimes_{R,\theta} M$. This proves that $\tau_{X\otimes_\A V, R}$ is natural with respect of morphism between Artinian objects in $\widehat{\Ca}$. 

The continuity of the deformation functor (see Remark \ref{univ0} (ii)) implies that for all objects $R$ in $\widehat{\Ca}$, there is a bijection between sets of deformations
\begin{equation*}
\widehat{\tau}_{X\otimes_\A V, R}: \widehat{\Fun}_V(R)\to \widehat{\Fun}_{X\otimes_\A V}(R),
\end{equation*}
which is natural with respect of morphisms between objects in $\widehat{\Ca}$. Consequently, we obtain that the universal deformation rings $R(\A,V)$ and $R(\A,X\otimes_\A V)$ are isomorphic in $\widehat{\Ca}$. This finishes the proof of Theorem \ref{thm1} (ii).

In the following, we provide an example (due to \O. Skarts{\ae}terhagen) of two finite dimensional $\k$-algebras $\A$ and $\Gamma$  which are singularly equivalent of Morita type with level (as in Definition \ref{defi:3.2}) and which show that Theorem \ref{thm1} (ii) can fail if both of the conditions for $\A$ and $V$ are not satisfied. 
\begin{example}\label{exam1}
Let $\A= \k Q/\langle \rho\rangle$ and $\Gamma = \k Q'/\langle \sigma \rangle$ be the monomial $\k$-algebras given by the following quivers with relations: 
\begin{align*}
Q: &\xymatrix{\underset{1}{\bullet}\ar@(ul,dl)_{\alpha}\ar[r]^{\beta} & \underset{2}{\bullet}},  & \rho=\{\alpha^2,\beta\alpha\},&\\
Q': &\xymatrix{\underset{3}{\bullet}\ar@(ul,dl)_{\gamma}}, & \sigma=\{\gamma^2\}.&
\end{align*}
It follows by \cite[Example 7.5]{skart} that $\A$ and $\Gamma$ are singularly equivalent of Morita type with level $\ell = 1$ as in Definition \ref{defi:3.2}. Moreover, $\A$ is a non-Gorenstein $\k$-algebra with radical square zero. It follows by \cite{chenxw3} that $\A$ is CM-free, i.e., every Gorenstein-projective left $\A$-module is projective. Thus if $V$ is a Gorenstein-projective left $\A$-module,  then $\SEnd_{\A}(V)=0$ and  $R(\A,V)\cong \k$ (by Remark \ref{univ0}). On the other hand, note that $\Gamma$ is a self-injective Nakayama $\k$-algebra with Loewy length equal to $2$. If $S_3$ denotes the simple $\Gamma$-module corresponding to the vertex of $Q'$, then $\SEnd_{\Gamma}(S_3) = \k$, and $\Ext_{\Gamma}^1(S_3,S_3)\not=0$. Thus by  \cite[Thm. 1.3 (ii)]{bleher15} we have that $R(\Gamma,S_3)\cong \k[\![t]\!]/(t^2)$. This shows that Theorem \ref{thm1} (ii) fails provided that both of the conditions $\A$ being Gorenstein and that $V$ being an non-projective indecomposable Gorenstein-projective with $\SEnd_{\A}(V)=\k$ are omitted in the hypothesis. 
\end{example}

\section{Applications}\label{sec4}
In the following, we discuss some immediate applications of the main results in this article. 

\subsection{Morita and triangular matrix algebras}
Let $\A$ and $\Gamma$ be finite dimensional $\k$-algebras, $B$ a $\A$-$\Gamma$-bimodule, and $C$ a $\Gamma$-$\A$-bimodule. We define the Morita $\k$-algebra
\begin{equation}\label{moritaalg}
\Sigma =\begin{pmatrix} \A & B\\C&\Gamma\end{pmatrix},
\end{equation} 
where the addition of elements of $\Sigma$ is componentwise and the multiplication is given by
\begin{equation*}
\begin{pmatrix} \lambda&b\\c&\gamma\end{pmatrix}\cdot \begin{pmatrix} \lambda'&b'\\c'&\gamma'\end{pmatrix}=\begin{pmatrix} \lambda\lambda' &\lambda b'+b\gamma'\\c\lambda'+\gamma c'&\gamma\gamma' \end{pmatrix},
\end{equation*} 
for all $\lambda,\lambda'\in \A$, $\gamma, \gamma'\in \Gamma$, $b,b'\in B$ and $c,c'\in C$. We refer the reader to \cite[\S 2.1]{gaopsa} for a detailed description of the abelian category $\Sigma$-mod as well as for a description of $\Sigma$-Gproj. The following result follows from \cite[Cor. 4.10]{gaopsa}.

\begin{lemma}\label{lemma4.1}
Let $\Sigma$ be a Morita $\k$-algebra as in (\ref{moritaalg}) which is also Gorenstein. 
\begin{enumerate}
\item If $C$ is projective as a right $\A$-module and $B$ is projective as a left $\A$-module, then the $\k$-algebra $\A$ is Gorenstein. 
\item If $B$ is projective as a right $\Gamma$-module and $C$ is projective as a left $\Gamma$-module, then the $\k$-algebra $\Gamma$ is Gorenstein. 
\end{enumerate}
\end{lemma}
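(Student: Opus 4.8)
<br>

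The plan is to derive the statement directly from the description of Gorenstein-projective $\Sigma$-modules in \cite[\S 2.1]{gaopsa} together with \cite[Cor. 4.10]{gaopsa}. Recall that a left $\Sigma$-module may be identified with a triple $(M_1, M_2, f)$, where $M_1 \in \A\textup{-mod}$, $M_2 \in \Gamma\textup{-mod}$, and $f$ encodes the action of $B$ and $C$; under the hypothesis of part (i) that $C$ is right $\A$-projective and $B$ is left $\A$-projective, the functor sending a left $\A$-module $N$ to the $\Sigma$-module supported on $N$ in the first coordinate (with $M_2 = 0$, or more precisely the appropriate induced/coinduced construction from \cite{gaopsa}) is exact and preserves projectivity, and \cite[Cor. 4.10]{gaopsa} identifies the essential image inside $\Sigma\textup{-Gproj}$.

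First I would recall explicitly the equivalence from \cite[Cor. 4.10]{gaopsa} describing $\Sigma\textup{-Gproj}$ in terms of $\A\textup{-Gproj}$ and $\Gamma\textup{-Gproj}$ (subject to the projectivity conditions on $B$ and $C$). Next, from the assumption that $\Sigma$ is Gorenstein, I would invoke \cite[Prop. 4.1]{auslander3} (quoted in \S\ref{int}) to characterize $\Sigma\textup{-Gproj}$ as the category of maximal Cohen--Macaulay $\Sigma$-modules, i.e. those $N$ with $\Ext_\Sigma^i(N,\Sigma) = 0$ for all $i > 0$. Then, for part (i), I would take an arbitrary finitely generated left $\A$-module $V$, push it to a left $\Sigma$-module $\widehat V$ via the exact projectivity-preserving functor determined by the projectivity of $B$ and $C$ over $\A$, and use \cite[Cor. 4.10]{gaopsa} to transfer Cohen--Macaulayness back and forth; since $\Sigma$ is Gorenstein, $\Sigma$ has finite injective dimension over itself on both sides, and one reads off from the matrix structure that $\A$ inherits finite injective dimension as a left and right module over itself. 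Part (ii) is entirely symmetric, interchanging the roles of the first and second coordinates and using the projectivity of $B$ and $C$ over $\Gamma$.

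The main obstacle I anticipate is purely bookkeeping: making precise which induction/coinduction functor between $\A\textup{-mod}$ and $\Sigma\textup{-mod}$ is exact and preserves both projectives and injectives under the stated one-sided projectivity hypotheses on $B$ and $C$, and verifying that the Gorenstein (finite injective dimension) property descends through it rather than merely the Cohen--Macaulay property of individual modules. Once the correct functor from \cite[\S 2.1, Cor. 4.10]{gaopsa} is pinned down, the finiteness of $\mathrm{injdim}$ for $\A$ (resp. $\Gamma$) on both sides follows formally, so the proof reduces to citing \cite[Cor. 4.10]{gaopsa} and unwinding the definition of Gorenstein algebra; I would keep the written proof short, essentially a one-paragraph reduction to that corollary for each of (i) and (ii).
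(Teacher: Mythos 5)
The paper offers no argument here at all: Lemma \ref{lemma4.1} is stated as following directly from \cite[Cor. 4.10]{gaopsa}, and that corollary asserts precisely the conclusion under these projectivity hypotheses. So to the extent that your proof ``reduces to citing \cite[Cor. 4.10]{gaopsa}'', you are doing exactly what the paper does. However, the derivation you sketch around that citation --- classifying $\Sigma\textup{-Gproj}$, pushing an arbitrary $\A$-module $V$ into $\Sigma\textup{-mod}$, and ``transferring Cohen--Macaulayness back and forth'' --- does not close the gap you yourself identify. Knowing that individual modules are maximal Cohen--Macaulay (i.e.\ $\Ext^{i}(-,\textup{ring})=0$ for $i>0$) over $\Sigma$ or over $\A$ says nothing about the finiteness of the injective dimension of the regular module, which is what ``Gorenstein'' means; indeed the paper itself warns in \S 1 that vanishing of $\Ext^i(V,\A)$ for all $i>0$ does not even force $V$ to be Gorenstein-projective when $\A$ is not known to be Gorenstein. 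So the step ``one reads off from the matrix structure that $\A$ inherits finite injective dimension'' is exactly the missing content, not a formality.

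The short honest argument is via the corner idempotent rather than via $\Sigma\textup{-Gproj}$. Let $e\in\Sigma$ be the idempotent with $e\Sigma e\cong\A$, so that $e\Sigma\cong\A\oplus B$ as a left $\A$-module and $\Sigma e\cong\A\oplus C$ as a right $\A$-module. The functor $e(-)\cong\Hom_\Sigma(\Sigma e,-)$ is exact, and its left adjoint $\Sigma e\otimes_\A-$ is exact precisely because $C$ is projective as a right $\A$-module; hence $e(-)$ preserves injectives. Applying $e(-)$ to a finite injective resolution of ${}_\Sigma\Sigma$ gives a finite injective resolution of ${}_\A(e\Sigma)\cong\A\oplus B$, and since ${}_\A\A$ is a direct summand, $\A$ has finite injective dimension as a left module over itself. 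The right-module statement is symmetric, using that $B$ is projective as a left $\A$-module to make $-\otimes_\A e\Sigma$ exact, so that $(-)e$ preserves injectives of right $\Sigma$-modules and carries a finite injective resolution of $\Sigma_\Sigma$ to one of $(\Sigma e)_\A\cong\A\oplus C$. Part (ii) is the same argument with the complementary idempotent. If you prefer to keep the proof as a bare citation, as the paper does, that is fine; but the intermediate machinery of $\Sigma\textup{-Gproj}$ and Cohen--Macaulay transfer should be dropped, since it neither suffices for nor is needed in the proof.
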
 

The following result follows from Lemma \ref{lemma4.1} and \cite[Example 4.6]{dalezios}.

\begin{lemma}\label{lemma4.2}
Let $\Sigma$ be as in the hypothesis of Lemma \ref{lemma4.1}.
\begin{enumerate}
\item Under the situation of Lemma \ref{lemma4.1} (i), if $\Gamma$ has finite projective dimension as a $\Sigma$-$\Sigma$-bimodule, then there exist a pair of bimodules $({_\Sigma} X_\A, {_\A}Y_\Sigma)$ that induces a singular equivalence of Morita type with level $\ell$ between $\Sigma$ and $\A$ (as in Definition \ref{defi:3.2}), where $\ell$ is equal to the projective dimension of $\Gamma$ as a $\Gamma$-$\Gamma$-bimodule. 
\item Under the situation of Lemma \ref{lemma4.1} (ii), if $\A$ has finite projective dimension as a $\Sigma$-$\Sigma$-bimodule, then there exist a pair of bimodules $({_\Sigma} X'_\Gamma, {_\Gamma}Y'_\Sigma)$ that induces a singular equivalence of Morita type with level $\ell$ between $\Sigma$ and $\Gamma$ (as in Definition \ref{defi:3.2}), where $\ell$ is equal to the projective dimension of $\A$ as a $\A$-$\A$-bimodule. 
\end{enumerate}
\end{lemma}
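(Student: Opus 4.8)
The plan is to obtain both statements directly from \cite[Example 4.6]{dalezios}, using Lemma \ref{lemma4.1} to supply the Gorenstein hypothesis on the relevant corner algebra. First I would fix the orthogonal idempotents $e_1=\left(\begin{smallmatrix}1&0\\ 0&0\end{smallmatrix}\right)$ and $e_2=\left(\begin{smallmatrix}0&0\\ 0&1\end{smallmatrix}\right)$ of $\Sigma$, so that $e_1+e_2=1$ and $e_1\Sigma e_1\cong\A$, $e_2\Sigma e_2\cong\Gamma$ as $\k$-algebras. Because the bimodule homomorphisms $B\otimes_\Gamma C\to\A$ and $C\otimes_\A B\to\Gamma$ built into the multiplication of (\ref{moritaalg}) are zero, a direct computation gives $\Sigma e_1\Sigma=\left(\begin{smallmatrix}\A&B\\ C&0\end{smallmatrix}\right)$ and $\Sigma e_2\Sigma=\left(\begin{smallmatrix}0&B\\ C&\Gamma\end{smallmatrix}\right)$, whence $\Sigma/\Sigma e_1\Sigma\cong\Gamma$ and $\Sigma/\Sigma e_2\Sigma\cong\A$ as $\Sigma$-$\Sigma$-bimodules. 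Moreover $\Sigma e_1\cong\A\oplus C$ as right $\A$-modules and $e_1\Sigma\cong\A\oplus B$ as left $\A$-modules (and symmetrically $\Sigma e_2\cong\Gamma\oplus B$ as left $\Gamma$-modules, $e_2\Sigma\cong\Gamma\oplus C$ as right $\Gamma$-modules), while $\Sigma e_1$, $e_1\Sigma$, $\Sigma e_2$, $e_2\Sigma$ are all projective on the $\Sigma$-side, being direct summands of $\Sigma$.

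For part (i): Lemma \ref{lemma4.1}(i) gives that $\A$ is Gorenstein; by hypothesis $\Sigma$ is Gorenstein and $\Gamma\cong\Sigma/\Sigma e_1\Sigma$ has finite projective dimension as a $\Sigma$-$\Sigma$-bimodule; and the hypotheses that $C$ is projective as a right $\A$-module and that $B$ is projective as a left $\A$-module say, by the preceding paragraph, precisely that $\Sigma e_1$ is projective as a right $\A$-module and that $e_1\Sigma$ is projective as a left $\A$-module. This is exactly the setting of \cite[Example 4.6]{dalezios} for the idempotent $e_1$ in the Gorenstein algebra $\Sigma$ with corner $e_1\Sigma e_1\cong\A$, and that example then furnishes a pair of bimodules $({_\Sigma}X_\A,{_\A}Y_\Sigma)$ --- constructed there from $\Sigma e_1$, $e_1\Sigma$ and a finite projective $\Sigma^e$-resolution of $\Gamma$ --- inducing a singular equivalence of Morita type with level $\ell=\mathrm{pd}_{\Gamma^e}\Gamma$ between $\Sigma$ and $\A$ in the sense of Definition \ref{defi:3.2}. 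Part (ii) is obtained by the same argument with the roles of the two corners interchanged, i.e.\ with $(\A,B,e_1)$ replaced by $(\Gamma,C,e_2)$: Lemma \ref{lemma4.1}(ii) gives that $\Gamma$ is Gorenstein, the projectivity hypotheses on $B$ and $C$ now say that $\Sigma e_2$ and $e_2\Sigma$ are one-sided projective over $\Gamma$, and \cite[Example 4.6]{dalezios} applied to $e_2$ produces the pair $({_\Sigma}X'_\Gamma,{_\Gamma}Y'_\Sigma)$ with level $\mathrm{pd}_{\A^e}\A$.

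The one point that genuinely needs care is matching the data above with the running hypotheses of \cite[Example 4.6]{dalezios}: one must have both $\Sigma$ and the corner $\A$ (resp.\ $\Gamma$) Gorenstein --- precisely what Lemma \ref{lemma4.1} adds to the standing assumption on $\Sigma$ --- so that the equivalence of singularity categories underlying that example descends to, and is compatible with, the stable categories of Gorenstein-projective modules; and one must keep track of the level, which \cite[Example 4.6]{dalezios} identifies with $\mathrm{pd}_{\Gamma^e}\Gamma$ (resp.\ $\mathrm{pd}_{\A^e}\A$), so that this quantity is in particular finite under the hypothesis that $\Gamma$ (resp.\ $\A$) has finite projective dimension over $\Sigma^e$. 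With these checks in place the statement follows at once from the cited example together with Lemma \ref{lemma4.1}, and I do not expect any further obstacle.
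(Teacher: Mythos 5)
Your proposal follows exactly the route the paper takes: the paper offers no argument beyond citing Lemma \ref{lemma4.1} together with \cite[Example 4.6]{dalezios}, and your writeup simply makes explicit what that citation leaves implicit --- the idempotents $e_1,e_2$, the identifications $e_i\Sigma e_i\cong\A,\Gamma$ and $\Sigma/\Sigma e_i\Sigma$, and the translation of the projectivity hypotheses on $B$ and $C$ into the hypotheses of the cited example. This is correct and essentially identical in substance to the paper's (one-line) proof.
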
 
The following result is an immediate consequence of Lemmata \ref{lemma4.1} and \ref{lemma4.2} together with Theorem \ref{thm1} (ii).

\begin{corollary}
Let $\Sigma$ be as in Lemma \ref{lemma4.1}, and let $W$ be an indecomposable Gorentein-projective left $\Sigma$-module with $\SEnd_\Sigma(W)=\k$. 
\begin{enumerate}
\item If $\Sigma$ is under the situation of Lemma \ref{lemma4.2} (i), then $\A$ is also a Gorenstein $\k$-algebra,  $Y\otimes_\Sigma W$ is also an indecomposable Gorenstein-projective left $\A$-module with $\SEnd_{\A}(Y\otimes_\Sigma W)=\k$ and the universal deformation rings $R(\Sigma, W)$ and $R(\A, Y\otimes_\Sigma W)$ are isomorphic in $\widehat{\Ca}$.
\item If $\Sigma$ is under the situation of Lemma \ref{lemma4.2} (ii), then $\Gamma$ is also a Gorenstein $\k$-algebra,  $Y'\otimes_\Sigma W$ is also an indecomposable Gorenstein-projective left $\Gamma$-module with $\SEnd_{\Gamma}(Y'\otimes_\Sigma W)=\k$ and the universal deformation rings $R(\Sigma, W)$ and $R(\Gamma, Y'\otimes_\Sigma W)$ are isomorphic in $\widehat{\Ca}$.
\end{enumerate}
\end{corollary}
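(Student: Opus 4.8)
The plan is to obtain the Corollary directly from Theorem~\ref{thm1}~(ii), once the inputs of Lemmata~\ref{lemma4.1} and~\ref{lemma4.2} have been assembled and the (purely notational) matching of algebras and bimodules has been carried out. I would begin with two trivial observations. First, the hypothesis $\SEnd_\Sigma(W)=\k$ forces $W$ to be non-projective, since a projective module has zero stable endomorphism ring and $0\not\cong\k$; thus $W$ is an indecomposable \emph{non-projective} Gorenstein-projective left $\Sigma$-module with $\SEnd_\Sigma(W)=\k$, i.e.\ exactly the kind of module to which Theorem~\ref{thm1} applies (with $\Sigma$ in the role of $\A$). Second, Definition~\ref{defi:3.2} is symmetric under simultaneously interchanging $X$ with $Y$ and the two algebras: if $({_\Gamma}X_\A,{_\A}Y_\Gamma)$ induces a singular equivalence of Morita type with level $\ell$ between $\A$ and $\Gamma$, then $({_\A}Y_\Gamma,{_\Gamma}X_\A)$ induces one between $\Gamma$ and $\A$, since the projectivity conditions and the two syzygy isomorphisms simply trade places.

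For part (i): being in the situation of Lemma~\ref{lemma4.2}~(i) entails in particular that $\Sigma$ is as in Lemma~\ref{lemma4.1}~(i), so Lemma~\ref{lemma4.1}~(i) yields at once that $\A$ is a Gorenstein $\k$-algebra --- the first assertion. Next, Lemma~\ref{lemma4.2}~(i) provides a pair $({_\Sigma}X_\A,{_\A}Y_\Sigma)$ inducing a singular equivalence of Morita type with level $\ell$ (equal to the projective dimension of $\Gamma$ as a $\Gamma$-$\Gamma$-bimodule) between $\Sigma$ and $\A$. By the symmetry observation, the pair $({_\A}Y_\Sigma,{_\Sigma}X_\A)$ induces a singular equivalence of Morita type with level $\ell$ in which $\Sigma$ plays the source role and $\A$ the target role; that is, in the notation of Theorem~\ref{thm1}~(ii) one substitutes $\A\rightsquigarrow\Sigma$, $\Gamma\rightsquigarrow\A$, $X\rightsquigarrow{_\A}Y_\Sigma$, $Y\rightsquigarrow{_\Sigma}X_\A$, and $V\rightsquigarrow W$. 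Since $\Sigma$ and $\A$ are both Gorenstein and $W$ is indecomposable non-projective Gorenstein-projective with $\SEnd_\Sigma(W)=\k$, Theorem~\ref{thm1}~(ii) applies and gives that $Y\otimes_\Sigma W$ is a non-projective Gorenstein-projective left $\A$-module with $\SEnd_\A(Y\otimes_\Sigma W)=\k$ and $R(\A,Y\otimes_\Sigma W)\cong R(\Sigma,W)$ in $\widehat{\Ca}$. The indecomposability of $Y\otimes_\Sigma W$ is part of what is established in the course of the proof of Theorem~\ref{thm1}~(ii) (through the use of Remark~\ref{rem2.5}~(ii) and the Krull--Schmidt--Azumaya theorem, which show that the transported module is indecomposable in $\A\textup{-\underline{Gproj}}$ and carries no nonzero projective direct summand, hence is indecomposable as a left $\A$-module).

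Part (ii) is entirely parallel: being in the situation of Lemma~\ref{lemma4.2}~(ii) gives, via Lemma~\ref{lemma4.1}~(ii), that $\Gamma$ is a Gorenstein $\k$-algebra, and, via Lemma~\ref{lemma4.2}~(ii), a pair $({_\Sigma}X'_\Gamma,{_\Gamma}Y'_\Sigma)$ inducing a singular equivalence of Morita type with level $\ell$ (equal to the projective dimension of $\A$ as a $\A$-$\A$-bimodule) between $\Sigma$ and $\Gamma$. Invoking the symmetry observation and then Theorem~\ref{thm1}~(ii) with $W$ transported along $Y'\otimes_\Sigma-$ yields that $Y'\otimes_\Sigma W$ is an indecomposable non-projective Gorenstein-projective left $\Gamma$-module with $\SEnd_\Gamma(Y'\otimes_\Sigma W)=\k$ and $R(\Gamma,Y'\otimes_\Sigma W)\cong R(\Sigma,W)$ in $\widehat{\Ca}$.

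I do not anticipate a genuine obstacle here: the substantive content --- that a singular equivalence of Morita type with level between Gorenstein algebras preserves universal deformation rings of Gorenstein-projective modules with stable endomorphism ring $\k$ --- is Theorem~\ref{thm1}~(ii), and Lemmata~\ref{lemma4.1}--\ref{lemma4.2} are precisely what turn the homological hypotheses on $\Sigma$ into the existence of such an equivalence together with the Gorensteinness of $\A$ (resp.\ of $\Gamma$). The only points demanding attention are the bookkeeping of the bimodule sides and the reconciliation of the directional conventions of Definition~\ref{defi:3.2} and of Theorem~\ref{thm1}~(ii), handled by the symmetry observation, together with the elementary remark that $\SEnd_\Sigma(W)=\k$ precludes $W$ from being projective, so that Theorem~\ref{thm1} is indeed applicable to $W$.
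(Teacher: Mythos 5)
Your proposal is correct and follows exactly the route the paper intends: the paper derives this corollary as an immediate consequence of Lemmata \ref{lemma4.1} and \ref{lemma4.2} combined with Theorem \ref{thm1} (ii), which is precisely your argument. Your additional bookkeeping --- noting that $\SEnd_\Sigma(W)=\k$ rules out $W$ being projective, and that Definition \ref{defi:3.2} is symmetric so the pair can be fed into Theorem \ref{thm1} (ii) with $\Sigma$ in the source role --- simply makes explicit what the paper leaves to the reader.
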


\begin{remark}\label{rem4.4}
Assume next that $\Sigma$ is as the triangular matrix $\k$-algebra 

\begin{equation}\label{triagalg}
\Sigma =\begin{pmatrix} \A & B\\0&\Gamma\end{pmatrix},
\end{equation} 
where $\A$ has infinite global dimension, $\Gamma$ has finite projective dimension as a $\Gamma$-$\Gamma$-bimodule equal to $m_1$, and that $B$ is a $\A$-$\Gamma$-bimodule with finite projective dimension equal to $m_2$. Then by the analogous results in \cite[Claim 3.1]{wang} for $\Sigma$, there exists a pair of bimodules $({_\Sigma}X_\A, {_\A}Y_{\Sigma})$ that induce a singular equivalence of Morita type with level $\ell+1$ between $\Sigma$ and $\A$ (as in Definition \ref{defi:3.2}), where $\ell =\min\{m_1,m_2\}$. Assume further that $\Sigma$ is Gorenstein. It follows by \cite[Thm. 2.2]{zhang} that $\A$ and $\Gamma$ are both Gorenstein. For a description of the objects in $\Sigma$-Gproj under this situation, we refer the reader to \cite[Thm. 1.4]{zhang}.
\end{remark}

The following result (which improves that in \cite[Thm. 1.3]{velez3}) follows immediately from Remark \ref{rem4.4} and Theorem \ref{thm1} (ii).

\begin{corollary}
Under the situation and notation in Remark \ref{rem4.4}, if $W$ is an indecomposable Gorenstein-projective left $\Sigma$-module with $\SEnd_{\Sigma}(W)=\k$, then $Y\otimes_\Sigma W$ is also an indecomposable Gorenstein-projective left $\A$-module with $\SEnd_\A(Y\otimes_\Sigma W)=\k$, and the universal deformation rings $R(\Sigma, W) $ and $R(\A, Y\otimes_{\Sigma} W)$ are isomorphic in $\widehat{\Ca}$. 
\end{corollary}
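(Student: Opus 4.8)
The plan is to deduce the statement as a direct application of Theorem~\ref{thm1}~(ii), with $\Sigma$ playing the role of the algebra "$\A$" of that theorem and $\A$ playing the role of "$\Gamma$". First I would check that the hypotheses of Theorem~\ref{thm1}~(ii) are in force: by assumption $\Sigma$ is Gorenstein, and by Remark~\ref{rem4.4} (which invokes \cite[Thm.~2.2]{zhang}) the algebra $\A$ is Gorenstein as well; moreover, again by Remark~\ref{rem4.4} (the analogue of \cite[Claim~3.1]{wang}), there is a pair of bimodules $({_\Sigma}X_\A,{_\A}Y_\Sigma)$ that induces a singular equivalence of Morita type with level $\ell+1$ between $\Sigma$ and $\A$, where $\ell=\min\{m_1,m_2\}$.

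Next I would match this data against Definition~\ref{defi:3.2}: reading $\Sigma$ as the source algebra and $\A$ as the target, the pair $({_\A}Y_\Sigma,{_\Sigma}X_\A)$ satisfies conditions (i)--(iv) with level $\ell+1$, so in the notation of Theorem~\ref{thm1}~(ii) the bimodule "$X$" is $Y$ and the module "$V$" is $W$. The only point that needs a short argument is that $W$ is non-projective, which is immediate: since $\SEnd_\Sigma(W)=\k\neq 0$, the module $W$ cannot be projective, so $W$ is an indecomposable non-projective Gorenstein-projective left $\Sigma$-module with $\SEnd_\Sigma(W)=\k$, exactly as Theorem~\ref{thm1}~(ii) requires. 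Applying that theorem then yields at once that $Y\otimes_\Sigma W$ is a non-projective Gorenstein-projective left $\A$-module with $\SEnd_\A(Y\otimes_\Sigma W)=\k$ and that $R(\A,Y\otimes_\Sigma W)\cong R(\Sigma,W)$ in $\widehat{\Ca}$. Indecomposability of $Y\otimes_\Sigma W$ follows as in Remark~\ref{rem2.5}~(ii), since $Y\otimes_\Sigma-$ restricts to an equivalence $\Sigma\textup{-\underline{Gproj}}\to\A\textup{-\underline{Gproj}}$ and $Y\otimes_\Sigma W$ acquires no projective summand.

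I do not expect a genuine obstacle here; the one thing to be careful about is the bookkeeping of the directions of the bimodules, namely feeding the pair produced by Remark~\ref{rem4.4} into Theorem~\ref{thm1}~(ii) with $\Sigma$ as the source and $\A$ as the target, so that it is the $\A$-$\Sigma$-bimodule $Y$ (and not the $\Sigma$-$\A$-bimodule $X$) that is tensored with $W$. Once that is set up correctly, the corollary is a formal consequence of Theorem~\ref{thm1}~(ii) together with Remark~\ref{rem4.4}.
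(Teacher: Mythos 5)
Your proposal is correct and matches the paper's argument, which likewise deduces the corollary immediately from Remark \ref{rem4.4} together with Theorem \ref{thm1} (ii) applied with $\Sigma$ as the source algebra and $\A$ as the target. Your extra care about the direction of the bimodules (using the symmetry of Definition \ref{defi:3.2} to feed the pair $({_\A}Y_\Sigma,{_\Sigma}X_\A)$ into the theorem) and the observation that $\SEnd_\Sigma(W)=\k\neq 0$ forces $W$ to be non-projective are exactly the points the paper leaves implicit.
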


\subsection{Singular equivalences induced by homological epimorphisms}

\begin{remark}\label{rem4.6}
Let $\A$ be a finite dimensional  $\k$-algebra and let $J\subseteq \A$ be an ideal.  Following \cite{delapena}, we say that 
$J$ is a {\it homological ideal} provided that the canonical map $\A\to \A/J$ is a homological epimorphism (in the sense of \cite{geigle}). It follows from the main result in \cite{chenxw2} that in this situation, if we further have that $J$ has finite projective dimension as a $\Gamma$-$\Gamma$-bimodule, then there is a singular equivalence between $\A$ and $\A/J$. In particular, if $\A$ and $\A/J$ are both Gorenstein, then the triangulated categories $\A$-\underline{Gproj} and $\A/J$-\underline{Gproj} are equivalent.  Moreover, it follows from \cite[Thm. 3.6]{qin} that $\A$ and $\A/J$ are singularly equivalent of Morita type with level $\ell$ (as in Definition \ref{defi:3.2}). 
\end{remark}

The following result follows immediately from Remark \ref{rem4.6} and Theorem \ref{thm1} (ii).

\begin{corollary}\label{cor4.7}
Let $\A$ be a Gorenstein $\k$-algebra and let $J$ a homological ideal of $\A$ such that $\Gamma = \A/J$ is also Gorenstein. Let $({_\Gamma}X_\A, {_\A}Y_\Gamma)$ be a pair of bimodules that induces a singular equivalence of Morita type with level $\ell$ (as in Definition \ref{defi:3.2}). If $V$ is an indecomposable Gorenstein-projective left $\A$-module with $\SEnd_\A(V)=\k$, then $X\otimes_\A V$ is also an indecomposable Gorenstein-projective left $\Gamma$-module with $\SEnd_{\Gamma}(X\otimes_\A V)=\k$ and the universal deformation rings $R(\A,V)$ and $R(\Gamma, X\otimes_\A V)$ are isomorphic in $\widehat{\Ca}$. 
\end{corollary}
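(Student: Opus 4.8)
The plan is to verify that the hypotheses of Theorem \ref{thm1} (ii) hold verbatim and then invoke it directly. By Remark \ref{rem4.6}, the situation of the corollary — $J$ a homological ideal of the Gorenstein $\k$-algebra $\A$ with $\Gamma=\A/J$ also Gorenstein — is precisely one in which a pair of bimodules $({_\Gamma}X_\A,{_\A}Y_\Gamma)$ inducing a singular equivalence of Morita type with level $\ell$ (in the sense of Definition \ref{defi:3.2}) exists; thus the hypotheses are consistent, and we are exactly in the setting of Theorem \ref{thm1} (ii), with both $\A$ and $\Gamma$ Gorenstein and singularly equivalent of Morita type with level $\ell$ via the given pair.

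The only nominal gap between the statement of the corollary and that of Theorem \ref{thm1} (ii) is that the latter is phrased for non-projective Gorenstein-projective modules, whereas the corollary assumes only that $V$ is indecomposable Gorenstein-projective with $\SEnd_\A(V)=\k$. But if an indecomposable module $V$ were projective, then the identity of $V$ would factor through a projective, so $\SEnd_\A(V)=0\neq\k$; hence the hypothesis $\SEnd_\A(V)=\k$ already forces $V$ to be non-projective. Therefore $V$ is an indecomposable non-projective Gorenstein-projective left $\A$-module with $\SEnd_\A(V)=\k$, which is exactly the input required by Theorem \ref{thm1} (ii).

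Applying Theorem \ref{thm1} (ii) to $\A$, $\Gamma$, $\ell$, $({_\Gamma}X_\A,{_\A}Y_\Gamma)$ and $V$ now yields at once that $X\otimes_\A V$ is a non-projective Gorenstein-projective left $\Gamma$-module with $\SEnd_\Gamma(X\otimes_\A V)=\k$ — and indecomposable, as is recorded inside the proof of that theorem via Remark \ref{rem2.5} (ii) — and that the universal deformation rings $R(\A,V)$ and $R(\Gamma,X\otimes_\A V)$ are isomorphic in $\widehat{\Ca}$. Since every step is a direct appeal to an already-established result, there is no substantive obstacle; the only point needing (purely bookkeeping) care is that the level $\ell$ furnished by Remark \ref{rem4.6} is the same $\ell$ feeding into Theorem \ref{thm1} (ii), which is immediate from the statements.
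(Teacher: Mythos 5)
Your proposal is correct and matches the paper's own argument, which simply derives the corollary immediately from Remark \ref{rem4.6} and Theorem \ref{thm1} (ii). Your added observation that $\SEnd_\A(V)=\k$ already forces $V$ to be non-projective (since the identity of a projective module factors through a projective, giving $\SEnd_\A(V)=0$) is a correct and worthwhile piece of bookkeeping that the paper leaves implicit.
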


In the following, we discuss an example (due to X. W. Chen) of two finite dimensional $\k$-algebras that verify Corollary \ref{cor4.7}.

\begin{example}

Consider the basic $\k$-algebras $\A$ and $\Gamma$ as in Figures \ref{fig1} and \ref{fig2}, respectively. It follows from \cite[Exam. 3.5]{chenxw2} and Remark \ref{rem4.6} that $\A$ is a Gorenstein $\k$-algebra with injective dimension $2$ at both sides and that there exists a singular equivalence of Morita type with level $\ell$ (as in Definition \ref{defi:3.2}) between $\A$ and $\Gamma$.  
\begin{figure}[ht]
\begin{align*}
Q &=\hspace*{1.5cm}\xymatrix@1@=20pt{
&&\overset{0'}{\bullet}\ar@/^/[d]^{\alpha_0}&&\\
&&\underset{0}{\bullet}\ar@/^/[u]^{\beta_0}\ar@/_/[ddl]_{\gamma_0}&&\\\\
\underset{2'}{\bullet}\ar@/^/[r]^{\alpha_2}&\underset{2}{\bullet}\ar@/^/[l]^{\beta_2}\ar@/_/[rr]^{\gamma_2}&&\underset{1}{\bullet}\ar@/_/[uul]_{\gamma_1}\ar@/^/[r]^{\beta_1}&\underset{1'}{\bullet}\ar@/^/[l]^{\alpha_1}
}\\\\
\rho_0&= \{\beta_i\alpha_i, \gamma_i\alpha_i, \beta_{i-1}\gamma_i, \alpha_i\beta_i-(\gamma_{i+1}\gamma_{i+2}\gamma_i)^2 : i\in \Z/3\}.
\end{align*}
\caption{The basic $\k$-algebra $\A_0=\k Q/\langle \rho_0\rangle$.}\label{fig1}
\end{figure}

\begin{figure}[ht]
\begin{align*}
Z_3 &=\xymatrix@1@=20pt{
&&\underset{0}{\bullet}\ar@/_/[ddl]_{\gamma_0}&&\\\\
&\underset{2}{\bullet}\ar@/_/[rr]^{\gamma_2}&&\underset{1}{\bullet}\ar@/_/[uul]_{\gamma_1}&
}\\\\
\rho_1&= \{(\gamma_0,\gamma_1,\gamma_2)^{6}:i\in \Z/3\}.
\end{align*}
\caption{The basic $\k$-algebra $\Gamma=\k Z_3/\langle \rho_1\rangle $.}\label{fig2}
\end{figure}
\noindent
Note also that $\Gamma$ is a self-injective (thus Gorenstein) Nakayama $\k$-algebra, and that $\A$ is a special biserial algebra (in the sense of \cite{buri}) of finite representation type. Thus we can describe combinatorially  the indecomposable non-projective objects in $\A$-mod by using so-called strings for $\A$; the corresponding $\A$-modules are called string modules. The morphisms between these string $\A$-modules can be completely described by using the results in \cite{krause}.  Moreover, it follows from \cite[Prop. 3.1 (b)]{auslander3} that $\A\textup{-Gproj}=\Omega^2(\A\textup{-mod})$. Using the above arguments together with the description of the irreducible morphisms between string $\A$-modules in \cite{buri}, we can identify all the indecomposable non-projective Gorenstein-projective left $\A$-modules in the stable Auslander-Reiten quiver of $\A$. More precisely, we have that the indecomposable non-projective Gorenstein-projective left $\A$-modules are given as follows:
\begin{align*}
V_{i,0}&=M[\alpha_{i+1}^{-1}\gamma_{i+2}\gamma_i\gamma_{i+1}\gamma_{i+2}\gamma_i],&V_{i,1}&=M[\alpha_{i+1}^{-1}\gamma_{i+2}\gamma_i\gamma_{i+1}\gamma_{i+2}],\\
V_{i,2}&=M[\alpha_{i+1}^{-1}\gamma_{i+2}\gamma_i\gamma_{i+1}], &V_{i,3}&=M[\alpha_{i+1}^{-1}\gamma_{i+2}\gamma_i], \\
V_{i,4}&=M[\alpha_{i+1}^{-1}\gamma_{i+2}],
\end{align*}
where $i\in \Z/3$.  It is straightforward to check that for all $i\in \Z/3$, $\Omega V_{i,0} = V_{i+2, 4}$, $\Omega V_{i,1}= V_{i+1,3}$ and $\Omega V_{i,2} = V_{i,2}$.  Then it follows from \cite{krause} and \cite[Lemma 5.2]{skart} that for all $i\in \Z/3$ and for $j\in \{0,1,2,3,4\}$, $\SEnd_{\A_0}(V_{i,j})\cong \k$,  $\Ext_{\A_0}^1(V_{i,j}, V_{i,j})=0$ with $j\not=3$, and $\Ext_{\A}^1(V_{i,3}, V_{i,3})\cong \k$ as $\k$-vector spaces. Thus for $j\in \{0,1,2,3,4\}$, $R(\A, V_{i,j})$ is universal and isomorphic to $\k$ for $j\not=3$, and $R(\A,V_{i,3})$ is a quotient of $\k[\![t]\!]$. Let $i\in \Z/3$ be fixed and denote by $P_{i'}$ the incomposable projective left $\A$-modules corresponding to the verticex $i'$ of $Q$. Then there exists a non-splitting short sequence of left $\A$-modules
\begin{equation}\label{ses}
0\to V_{i,3}\xrightarrow{\iota}P_{i'}\oplus V_{i,0}\xrightarrow{\pi} V_{i,3}\to 0. 
\end{equation} 
\noindent
If we let $M= P_{i'}\oplus V_{i,0}$, then $M$ defines a non-trivial a left $\k[\![t]\!]/(t^2)\A$-module by letting $t$ act on $m\in M$ as $t\cdot m= (\iota \circ \pi)(m)$. Thus there exists a unique surjective $\k$-algebra homomorphism $\theta: R(\A, V_{i,3})\to \k[\![t]\!]/(t^2)$ in $\widehat{\Ca}$ corresponding to the deformation defined by $M$. Assume that $\theta$ is not an isomorphism. Thus there exists a surjective $\k$-algebra homomorphism $\theta': R(\A, V_{i,3})\to \k[\![t]\!]/(t^3)$ in $\widehat{\Ca}$ such that $\pi_{3,2}\circ \theta' = \theta$, where $\pi_{3,2}: \k[\![t]\!]/(t^3)\to \k[\![t]\!]/(t^2)$ is the natural projection. Let $M'$ be a left $\k[\![t]\!]/(t^3)\A$-module that defines a lift of $V_{i,3}$ over $\k[\![t]\!]/(t^3)$ corresponding to $\theta'$. Note that $M'/t^2M'\cong M$ and $t^2 M'\cong V_{i,3}$. Thus, we obtain a short exact sequence of $\k[\![t]\!]/(t^3)\A$-modules
\begin{equation}\label{splisseq}
0\to V_{i,3}\to M'\to M\to 0. 
\end{equation}
Note that since $\Ext_{\A}^1(M,V_{i,3})=\SHom_\A(\Omega_\A V_{i,0},V_{i,3})=0$, it follows that (\ref{splisseq}) splits as a sequence of left $\A$-modules. Hence $M'=V_{i,3}\oplus M$ as left $\A$-modules. Thus if $(v,m)\in M'$ with $v\in V_{i,3}$ and $m\in M$, it follows that the action of $t$ on $M'$ is given by $t\cdot (v,m) = (\sigma(m), t\cdot m)$, where $\sigma: M\to V_{i,3}$ is a surjective $\A$-module homomorphism. Then there exists $c\in \k^\ast$ such that $\sigma = c\pi$, where $\pi$ is as in (\ref{ses}), and thus the kernel of $\sigma$ is $tM$. This implies that $\sigma(tm)=0=t^2m$ for all $m\in M$, and consequently, $t^2(v,m)=(\sigma(tm),t^2m)=(0,0)$ for all $v\in V_{i,3}$ and $m\in M$, which contradicts that  $t^2M'\cong V_{i,3}$. Thus $\theta$ is a $\k$-algebra isomorphism and $R(\A, V_{i,3})\cong \k[\![t]\!]/(t^2)$.  Therefore, if $V$ is an indecomposable Gorenstein-projective left $\A$-module, then $\SEnd_{\A}(V)=\k$ and the universal deformation ring $R(\A, V)$ is isomorphic either to $\k$ or to $ \k[\![t]\!]/(t^2)$. On the other hand, by using \cite[Thm. 1.2]{bleher15} it is straightforward to see that if $V'$ is an indecomposable non-projective (thus a Gorenstein-projective) left $\Gamma$-module, then $\SEnd_{\Gamma}(V')=\k$ and the universal deformation ring $R(\Gamma, V')$ is isomorphic either to $\k$ or to $\k[\![t]\!]/(t^2)$. This verifies Corollary \ref{cor4.7}. 
\end{example}

\subsection{Singular equivalences induced by $2$-recollements}
To end this section, we provide a result involving universal deformation rings of Gorenstein-projective modules over Gorenstein algebras and $2$-recollements (as introduced in \cite[Def. 2]{qin2}.   

Let $\mathscr{T}'$, $\mathscr{T}$, and $\mathscr{T}''$ be triangulated categories. Following \cite{beillinson-bernstein-deligne}, a {\it recollement} of $\mathscr{T}$ relative to $\mathscr{T}'$ and $\mathscr{T}''$ is given by 
\begin{equation}
\xymatrix@1@=60pt{\mathscr{T}'\ar[r]^{i_\ast= i_!}&\mathscr{T}\ar@/^1.5pc/[l]^{i^!}\ar@/_1.5pc/[l]_{i^\ast}\ar[r]^{j^!= j^\ast}&\mathscr{T}''\ar@/^1.5pc/[l]^{j_\ast}\ar@/_1.5pc/[l]_{j_!}
}
\end{equation}
such that 
\begin{itemize}
\item[(R1)] $(i^\ast, i_\ast)$, $(i_!, i^!)$, $(j_!, j^!)$ and $(j^\ast, j_\ast)$ are adjoint pairs of triangulated functors;
\item[(R2)] $i_\ast, j_!$ and $j_\ast$ are full embeddings;
\item[(R3)] $ j^!i_\ast = 0$ (and thus we also have $i^!j_\ast = 0$ and $i^\ast j_! = 0$); 
\item[(R4)] for each object $X$ in $\mathscr{T}$, there are triangles
\begin{align*}
j_!j^!x\to X\to i_\ast i ^\ast X\to\\
i_!i^!x\to X\to j_\ast j ^\ast X\to
\end{align*}
\noindent
where the arrows to and from $X$ are the counits and the units of the adjoint pairs respectively. 
\end{itemize}

Following \cite{qin2}, a {\it $2$-recollement} of $\mathscr{T}$ relative to $\mathscr{T}'$ and $\mathscr{T}''$ is given by a diagram of functors of triangulated categories 
\begin{equation}\label{2recoll}
\xymatrix@1@=60pt{\mathscr{T}'\ar@/^0.5pc/[r]^{i_2}\ar@/_2pc/[r]_{i_4}&\mathscr{T}\ar@/^0.5pc/[l]^{i_3}\ar@/_2pc/[l]_{i_1}\ar@/^0.5pc/[r]^{j_2}&\mathscr{T}''\ar@/^0.5pc/[l]^{j_3}\ar@/_2pc/[l]_{j_1}\ar@/^2pc/[l]^{j_4}
}
\end{equation}
such that each of the two possible consecutive three layers form a recollement of $\mathscr{T}$ relative to $\mathscr{T}'$ and $\mathscr{T}''$. 

For all finite dimensional $\k$-algebras $\A$, we denote by $\mathcal{D}(\A)=\mathcal{D}(\A\textup{-Mod})$ the derived category of all modules over $\A$ from the left side, which is a triangulated category (see e.g. \cite[Chap. I]{hartshorne}).  

The following result is due to Y. Qin (see \cite[Cor. 3.3]{qin}).

\begin{lemma}\label{sing2recoll}
Let $\A$, $\Gamma$ and $\Sigma$ be finite dimensional $\k$-algebras such that $\Gamma$ has finite projective dimension as a $\Gamma$-$\Gamma$-bimodule. Assume that $\mathscr{T}=\mathcal{D}(\Sigma)$ admits a $2$-recollement relative to $\mathscr{T}'=\mathcal{D}(\Gamma)$ and $\mathscr{T}''=\mathcal{D}(\A)$ as in (\ref{2recoll}). Then $\Sigma$ and $\A$ are singularly equivalent of Morita type with level as in Definition \ref{defi:3.2}.
\end{lemma}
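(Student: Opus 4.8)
The plan is to deduce Lemma \ref{sing2recoll} from the work of Qin together with the general mechanism already recorded in Remark \ref{rem4.6}. First I would recall the structure of a $2$-recollement of $\mathcal{D}(\Sigma)$ relative to $\mathcal{D}(\Gamma)$ and $\mathcal{D}(\A)$: the diagram (\ref{2recoll}) contains, in its lower three layers, a recollement exhibiting $\mathcal{D}(\A)$ as a quotient of $\mathcal{D}(\Sigma)$ by the image of $\mathcal{D}(\Gamma)$, and the extra adjoints $i_4$ and $j_4$ present in a $2$-recollement force this recollement to be especially rigid. The key point is that a $2$-recollement at the level of derived module categories is, up to the appropriate equivalences, induced by an idempotent $e\in\Sigma$ together with a triangle functor realizing $\A\simeq e\Sigma e$ and $\Gamma\simeq \Sigma/\Sigma e\Sigma$ in a derived-compatible way; this is exactly the setup in which Qin proves his singular-equivalence statement.

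The main steps, in order, would be: (1) Use the classification of $2$-recollements of derived module categories (from \cite{qin2}, and used in \cite{qin}) to identify $\A$ with $e\Sigma e$ for a suitable idempotent $e$ of $\Sigma$, and to see that the complementary quotient $\Sigma/\Sigma e\Sigma$ is derived equivalent to $\Gamma$; here the hypothesis that $\Gamma$ has finite projective dimension as a $\Gamma$-$\Gamma$-bimodule transports (via that derived equivalence, which preserves the relevant finiteness) to the statement that $\Sigma e\Sigma$, or equivalently the relevant quotient bimodule, has finite projective dimension over $\Sigma^e$. (2) Invoke \cite[Cor.\ 3.3]{qin} directly: Qin shows precisely that under a $2$-recollement with the stated finiteness hypothesis on $\Gamma$, the algebras $\Sigma$ and $\A$ are singularly equivalent of Morita type with level. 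Since the statement of Lemma \ref{sing2recoll} is verbatim the content of that corollary, the proof is essentially a citation once the hypotheses are matched up; the only real work is checking that the finiteness hypothesis stated here (finite projective dimension of $\Gamma$ as a bimodule) is the one Qin requires, and recording which bimodules $({}_\Sigma X_\A,{}_\A Y_\Sigma)$ realize the equivalence (these come from the pair of adjoint functors $j_1,j_2$ in (\ref{2recoll}), suitably represented by bimodule tensor functors using that $j_1,j_2$ are given by tensoring with the complexes $\Sigma e$ and $e\Sigma$).

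Concretely, I would write: by \cite[Thm.\ 3.6]{qin} (or the analysis preceding \cite[Cor.\ 3.3]{qin}) the existence of a $2$-recollement (\ref{2recoll}) implies $\A$ is isomorphic to $e\Sigma e$ for an idempotent $e\in\Sigma$ and that there is a triangle equivalence $\mathcal{D}(\Gamma)\simeq \mathcal{D}(\Sigma/\Sigma e\Sigma)$ under which the bimodule finiteness of $\Gamma$ corresponds to $\Sigma/\Sigma e\Sigma$ having finite projective dimension over its enveloping algebra, which is exactly the condition appearing in the homological-epimorphism picture of Remark \ref{rem4.6}; then applying \cite[Cor.\ 3.3]{qin} yields a pair of bimodules $({}_\Sigma X_\A,{}_\A Y_\Sigma)$ satisfying conditions (i)--(iv) of Definition \ref{defi:3.2} for some level $\ell\geq 0$.

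The hard part will be the bookkeeping in step (1): making precise that a $2$-recollement at the derived level really does arise from such an idempotent and a homological epimorphism, and that the hypothesis ``$\Gamma$ has finite projective dimension as a $\Gamma$-$\Gamma$-bimodule'' is preserved under the derived equivalence $\mathcal{D}(\Gamma)\simeq\mathcal{D}(\Sigma/\Sigma e\Sigma)$ in the form needed to apply \cite{chenxw2} and \cite{qin}. Since all of this is already carried out in \cite{qin2} and \cite{qin}, in the final text I would keep the argument short, cite \cite[Cor.\ 3.3]{qin} as the decisive input, and explain the reduction only to the extent needed to exhibit the bimodules $X$ and $Y$ explicitly so that Theorem \ref{thm1} (ii) can be applied to them in the corollary that follows.
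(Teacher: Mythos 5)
Your proposal is correct and matches the paper exactly: the paper gives no proof of Lemma \ref{sing2recoll} at all, simply attributing it to Y.~Qin via the citation \cite[Cor.~3.3]{qin}, which is precisely the decisive input you identify. Your additional reconstruction of Qin's argument via idempotents and homological epimorphisms is supplementary context the paper does not include, but the essential step---matching the hypotheses and citing Qin's corollary---is the same.
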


The following result follows immediately from Lemma \ref{sing2recoll} and Theorem \ref{thm1} (ii).

\begin{corollary}
Assume that $\A$, $\Gamma$ and $\Sigma$ are as in Lemma \ref{sing2recoll}, with  $\A$ and $\Sigma$ Gorenstein $\k$-algebras. If  $W$ is a Gorenstein-projective left $\Sigma$-module with $\SEnd_\Sigma(W)=\k$, then there exists a Gorenstein-projective left $\A$-module $V$ such that $\SEnd_\A(V)=\k$ and the universal deformation rings $R(\Sigma, W)$ and $R(\A,V)$ are isomorphic in $\widehat{\Ca}$. 
\end{corollary}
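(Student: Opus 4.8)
The plan is to first replace $W$ by its unique indecomposable non-projective Gorenstein-projective direct summand, and then apply Theorem \ref{thm1} (ii) to the singular equivalence of Morita type with level supplied by Lemma \ref{sing2recoll}.

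First I would unpack the hypothesis $\SEnd_\Sigma(W)=\k$. Since $\Sigma\textup{-Gproj}$ is a Krull-Schmidt category, so is the triangulated stable category $\Sigma\textup{-\underline{Gproj}}$, and $\SEnd_\Sigma(W)$ is exactly the endomorphism ring of the image $\underline{W}$ of $W$ in $\Sigma\textup{-\underline{Gproj}}$. Because $\k$ is local, it has no nontrivial idempotents, so $\underline{W}$ is indecomposable in $\Sigma\textup{-\underline{Gproj}}$; it is also nonzero because $\SEnd_\Sigma(W)\neq 0$. Hence by the Krull-Schmidt-Azumaya theorem there is an indecomposable non-projective Gorenstein-projective left $\Sigma$-module $W_0$ and a projective left $\Sigma$-module $P$ with $W\cong W_0\oplus P$, and $\SEnd_\Sigma(W_0)=\SEnd_\Sigma(W)=\k$. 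By Remark \ref{rem2.2} (ii), the versal deformation ring $R(\Sigma,W)=R(\Sigma,W_0\oplus P)$ is universal and isomorphic to $R(\Sigma,W_0)$ in $\widehat{\Ca}$.

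Next I would invoke Lemma \ref{sing2recoll}: the $2$-recollement of $\mathcal{D}(\Sigma)$ relative to $\mathcal{D}(\Gamma)$ and $\mathcal{D}(\A)$, together with $\Gamma$ having finite projective dimension as a $\Gamma$-$\Gamma$-bimodule, yields that $\Sigma$ and $\A$ are singularly equivalent of Morita type with level in the sense of Definition \ref{defi:3.2}; choosing the bimodule pair in the direction sending left $\Sigma$-modules to left $\A$-modules gives an $\A$-$\Sigma$-bimodule $X$ and a $\Sigma$-$\A$-bimodule $Y$ inducing such an equivalence for some $\ell\geq 0$. Since both $\Sigma$ and $\A$ are Gorenstein by hypothesis, and $W_0$ is an indecomposable non-projective Gorenstein-projective left $\Sigma$-module with $\SEnd_\Sigma(W_0)=\k$, Theorem \ref{thm1} (ii) (with $\Sigma$ in the role of the source algebra and $\A$ in the role of the target algebra) shows that $V:=X\otimes_\Sigma W_0$ is a non-projective Gorenstein-projective left $\A$-module with $\SEnd_\A(V)=\k$ and that $R(\A,V)\cong R(\Sigma,W_0)$ in $\widehat{\Ca}$. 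Combining with the previous paragraph yields $R(\A,V)\cong R(\Sigma,W_0)\cong R(\Sigma,W)$, which proves the corollary.

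The argument is essentially formal once Theorem \ref{thm1} (ii) is available; the only steps requiring care are the opening reduction --- making sure that $\SEnd_\Sigma(W)\cong\k$ genuinely forces $W$ to have exactly one indecomposable non-projective summand, occurring with multiplicity one, which uses that a local ring has no nontrivial idempotents and that the relevant stable category is Krull-Schmidt --- and keeping straight the direction of the bimodule pair coming from Lemma \ref{sing2recoll}, so that the produced module $V$ indeed lives over $\A$.
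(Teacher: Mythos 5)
Your proposal is correct and follows essentially the same route as the paper, which derives the corollary directly from Lemma \ref{sing2recoll} together with Theorem \ref{thm1} (ii) by taking $V$ to be the image of $W$ under the bimodule tensor functor. The only difference is that you spell out the reduction of $W$ to its unique indecomposable non-projective Gorenstein-projective summand $W_0$ (using Krull--Schmidt and Remark \ref{rem2.2} (ii)), a step the paper leaves implicit since Theorem \ref{thm1} (ii) is stated for indecomposable modules; this is a legitimate and welcome clarification rather than a departure.
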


\section{Acknowledgments}  
The author wants to express his gratitude to the anonymous referee, who provided many suggestions and corrections that helped the readability and quality of this article.

\bibliographystyle{amsplain}
\bibliography{Morita_Type_with_Level-Rev}

\end{document}